\documentclass{scrartcl}

\usepackage[english]{babel}
\usepackage[colorlinks]{hyperref}
\usepackage{amsfonts}
\usepackage{amsmath}
\usepackage{amsthm}
\usepackage{amssymb}
\usepackage{float}
\usepackage{rotating}
\usepackage[autostyle]{csquotes}
\usepackage{url}
\usepackage{microtype}
\usepackage{floatpag}

\DeclareMathOperator{\PGammaL}{P\Gamma{}L}

\DeclareMathOperator{\rk}{rk}
\DeclareMathOperator{\Aut}{Aut}

\newcommand{\gaussmnum}[3]{\left[\begin{smallmatrix}{#1}\\{#2}\end{smallmatrix}\right]_{#3}}
\newcommand{\gaussmset}[2]{\left[\begin{smallmatrix}{#1}\\{#2}\end{smallmatrix}\right]}
\newcommand{\F}{\ensuremath{\mathbb{F}}}
\newcommand{\Fq}{\ensuremath{\F_q}}
\newcommand{\Fqn}{\ensuremath{V}}
\newcommand{\PFqn}{\ensuremath{\operatorname{L}(\Fqn)}}
\newcommand{\Gqnk}{\ensuremath{\gaussmset{\Fqn}{k}}}
\newcommand{\I}[2]{\ensuremath{\mathcal{I}\left(#1,#2\right)}}
\newcommand{\dham}{\mathrm{d}_{\mathrm{Ham}}}
\newcommand{\rdist}{\mathrm{d}_{\mathrm{r}}}
\newcommand{\sdist}{\mathrm{d}_{\mathrm{s}}}

\newtheorem{theorem}{Theorem}
\newtheorem{lemma}{Lemma}
\newtheorem{corollary}{Corollary}

\newtheorem{fact}{Fact}

\title{Classifying optimal binary subspace codes of length 8, constant dimension 4 and minimum distance 6}

\author{Daniel Heinlein \and Thomas Honold \and Michael Kiermaier \and Sascha Kurz \and Alfred Wassermann
\thanks{
Supported by the grants KU 2430/3-1, WA 1666/9-1 -- ``Integer Linear Programming Models for Subspace Codes and 
Finite Geometry'' -- from the German Research Foundation and by Grant
no.~61571006 -- ``Research on Subspace Codes and
Related Combinatorial Structures'' -- from the National Natural Science Foundation of China.
\newline
D. Heinlein, M. Kiermaier, S. Kurz, A. Wassermann: Department of Mathematics, University of Bayreuth, Bayreuth, Germany; firstname.lastname@uni-bayreuth.de
\newline
T. Honold: ZJU-UIUC Institute, Zhejiang University, Haining, China; \href{mailto:honold@zju.edu.cn}{honold@zju.edu.cn}
\newline
The final publication is available at \url{https://link.springer.com/article/10.1007\%2Fs10623-018-0544-8}.
}
}

\begin{document}
\maketitle

\begin{abstract}
\noindent
We determine the maximum size $A_2(8,6;4)$ of a binary subspace code of packet length $v=8$, minimum subspace distance
$d=6$, and constant dimension $k=4$ to be $257$.
There are two isomorphism types of optimal codes.
Both of them are extended LMRD codes.
In finite geometry terms, the maximum number of solids in 
$\operatorname{PG}(7,2)$ mutually intersecting in at most a point is $257$.
The result was obtained by combining the classification of
substructures
with integer linear programming techniques.
This result implies that the maximum
  size $A_2(8,6)$ of a binary mixed-dimension subspace code of packet length $8$ and minimum subspace distance~$6$ is $257$ as well.

\noindent\textbf{Keywords:} network coding, constant-dimension codes, subspace distance, classification, integer linear programming.

\noindent\textbf{MSC:} 51E20 94B65 05B25 51E23
\end{abstract}

\section{Introduction}

Let $q>1$ be a prime power, {\Fq} the field with $q$ elements, and $V\cong{\F}_q^v$ a $v$-dimensional vector space over {\Fq}. 
By ${\PFqn}$ we denote the set of all subspaces of $V$.
The set ${\PFqn}$ forms a lattice with respect to the inclusion order
$U \le W \Leftrightarrow U \subseteq W$, the lattice of flats of the
projective geometry  
$\operatorname{PG}(V)\cong
\operatorname{PG}(\Fq^v)=\operatorname{PG}(v-1,q)$, and
a metric space with respect to the subspace distance
\[
\sdist(U,W)= \dim(U+W)-\dim(U \cap W)=\dim(U)+\dim(W)-2\dim(U \cap W),
\]
which
may be viewed as a $q$-analogue of the Hamming space $(\F_2^v,\dham)$.

The metric space $(\PFqn,\sdist)$ plays an important role in network coding.
It was introduced as part of the subspace channel model
in~\cite{MR2451015} to describe error-resilient data transmission in packet networks employing random linear network coding.

For $k\in\{0,\ldots,v\}$, {\Gqnk} denotes the set of all $k$-dimensional subspaces in {\Fqn}. 
We have
\[
\#\Gqnk=\gaussmnum{v}{k}{q}= \prod_{i=1}^{k} \frac{q^{v-k+i}-1}{q^i-1}\text{.}
\]

A subset $\mathcal{C}$ of $\gaussmset{V}{k}$ is called a $k$-dimensional \emph{constant-dimension code} (\emph{CDC}).
As usual, the elements of $\mathcal{C}$ are called \emph{codewords}.
For $\#\mathcal{C} \geq 2$, the minimum distance of $\mathcal{C}$ is defined as $\sdist(\mathcal{C}) = \min\{\sdist(U,W) \mid U,W\in\mathcal{C}, U\neq W\}$.
The most important parameters of a CDC $\mathcal{C}$ are
the order $q$ of the base field, the dimension $v$ of
  the ambient space $V$, the minimum (subspace) distance $d =
  \sdist(\mathcal{C})$ of $\mathcal{C}$, the cardinality
  $N=\#\mathcal{C}$, and the constant dimension $k$ of each element in
  $\mathcal{C}$. We denote them by $(v,N,d;k)_q$.
In a $(v,N,d;k)_q$ CDC the minimum distance $d$ is always an even number satisfying $2\le d\le 2\min\{k,v-k\}$.

The determination of the corresponding maximal size
$A_q(v,d;k)$ and the classification of the optimal codes is known as the \emph{main problem of subspace coding}, since it forms a $q$-analogue of the \emph{main problem of classical coding theory} (cf.~\cite[page~23]{MR0465509}).

Without the restriction of all codewords having the same
  dimension, i.e., $\mathcal{C}\subseteq\PFqn$, the code
  $\mathcal{C}$ is called a \emph{subspace code} (per se) or a \emph{mixed-dimension code} (\emph{MDC}).
The maximal cardinality of an MDC in $V$ having subspace distance $d$ is denoted as $A_q(v,d)$.
Clearly, $A_q(v,d;k) \leq A_q(v,d)$ for all $k$.

In the following, let $\beta$ be a fixed non-degenerate symmetric bilinear form on $V$ and $\pi : L(V) \to L(V), U \mapsto U^\perp$ the corresponding polarity.
The \emph{orthogonal code} or \emph{dual code} of a subspace code $\mathcal{C}$ is defined as
\[
	\mathcal{C}^\perp = \pi(\mathcal{C}) = \{U^\perp \mid U \in \mathcal{C}\}\text{.}
\]
Up to isomorphism of subspace codes as defined further below, the code $\mathcal{C}^\perp$ does not depend on the particular choice of $\beta$.

Considering orthogonal codes allows us to almost halve the parameter space:
If $\mathcal{C}$ is a $(v,N,d;k)_q$ CDC then $\mathcal{C}^\perp$ has the parameters $(v,N,d;v-k)_q$, i.e., $A_q(v,d;k)=A_q(v,d;v-k)$, so that we can assume $k\le \frac{v}{2}$ in the following.
The iterative application of the so-called 
Johnson type bound II (\cite[Theorem~3]{xia2009johnson},~\cite[Theorem~4,5]{MR2810308}), which is a $q$-generalization of 
\cite[Inequality~(5)]{johnson1962new}, gives the upper bound 
\begin{equation}
\label{ie_r_johnson}
A_q(v,d;k) \le
\left\lfloor \frac{q^{v}-1}{q^{k}-1} \left\lfloor \frac{q^{v-1}-1}{q^{k-1}-1} \left\lfloor \ldots
\left\lfloor \frac{q^{v'+1}-1}{q^{\frac{d}{2}+1}-1} A_q(v',d;\frac{d}{2}) \right\rfloor
\ldots \right\rfloor \right\rfloor \right\rfloor
\end{equation}
where $v' = v - k +
\frac{d}{2}$.  It is attained with equality at
$v=ak$ and
  $d=2k$, i.e., for spreads, and also at $v=13$, $k=3$,
  $d=4$ with
  $A_2(13,4;3)=1597245$, see~\cite{MR3542513}.  Using
  $q^r$-divisible linear codes over
  $\F_q$ with respect to the Hamming metric, this bound was sharpened
  very recently, see~\cite{kiermaier2017improvement}, to
\begin{equation}
\label{ie_best_upper_bound}
A_q(v,d;k) \!\le
\!\left\{\! \frac{q^{v}-1}{q^{k}-1} \!\left\{\! \frac{q^{v-1}-1}{q^{k-1}-1} \!\left\{\! \!\ldots\!
\!\left\{\! \frac{q^{v'+1}-1}{q^{\frac{d}{2}+1}-1} A_q(v',d;\frac{d}{2}) \!\right\}_{\!\!\frac{d}{2}+1}
\!\!\!\!\!\!\!\ldots \!\right\}_{\!\!k-2} \!\right\}_{\!\!k-1}\! \right\}_{\!\!k}\!\!,
\end{equation}
where $\left\{a / \gaussmnum{k}{1}{q}\right\}_k:=b$ with maximal $b\in\mathbb{N}$ permitting 
a representation of $a-b\cdot \gaussmnum{k}{1}{q}$ as non-negative integer combination of the summands $q^{k-1-i}\cdot\frac{q^{i+1}-1}{q-1}$ 
for $0\le i\le k-1$.\footnote{As an example we consider $A_2(9,6;4)\le 
\left\{\gaussmnum{9}{1}{2}A_2(8,6;3)/\gaussmnum{4}{1}{2}\right\}_4=
\left\{\frac{17374}{15}\right\}_4$, using $A_2(8,6;3)=34$. We have $\left\lfloor \frac{17374}{15} \right\rfloor=1158$,  
$17374-1158\cdot 15=4$, $17374-1157\cdot 15=19$, and $17374-1156\cdot 15=34$. Since $4$ and $19$ cannot be written as 
a non-negative linear combination of $8$, $12$, $14$, and $15$, but $34=14+12+8$, we have $A_2(9;6;4)\le 1156$, which improves 
upon the iterative Johnson bound by two. Let us remark that~\cite{kiermaier2017improvement} contains an easy and fast algorithm to check 
the representability as non-negative integer linear combination as specified above.} Of course, Inequality~(\ref{ie_r_johnson}) is implied by 
Inequality~(\ref{ie_best_upper_bound}). Both bounds refer back to bounds for so-called partial spreads, 
i.e., $(v,N,2k;k)_q$ codes, for which the minimum distance has the maximal value $d=2k$. For upper bounds in this special subclass of CDCs, there is a 
recent series of improvements~\cite{kurz2017improved,kurz2017packing,nastase2016maximum}. The 
underlying techniques can be explained using the language of projective $q^{k-1}$-divisible codes and the linear programming 
method, see~\cite{honold2016partial}. While a lot of upper bounds for the maximum sizes of CDCs have been proposed in the 
literature, most of them are dominated by Inequality~(\ref{ie_r_johnson}), see~\cite{heinlein2017asymptotic}. Indeed, besides 
Inequality~(\ref{ie_best_upper_bound}), the only known improvements were $A_2(6,4;3)=77<81$~\cite{MR3329980} and 
$A_2(8,6;4)\le 272<289$~\cite{new_bounds_subspaces_codes}. The latter result is improved
in this paper. For numerical values of the known lower and upper bounds on
the sizes of subspace codes we refer the reader to the online tables at \url{http://subspacecodes.uni-bayreuth.de} 
associated with~\cite{HKKW2016Tables}. The tables in particular contain representatives for the two isomorphism types of $(8,257,6;4)_2$ CDCs. A survey on Galois geometries and coding theory can be found in~\cite{Etzion2016}. 

This article investigates binary CDCs with $v=8$, $d=6$ and $k=4$.
The so-called Echelon--Ferrers construction, see e.g.~\cite{MR2589964}, gives $A_2(8,6;4)\ge 257$.
More precisely, a corresponding code is given by a lifted maximum rank distance code (LMRD code), extended by a single codeword.
In Corollary~\ref{cor_isomorphism_types} we will show that up to
isomorphism there are two such codes.
By~\cite[Theorem~10]{MR3015712}, this construction is optimal for subspace codes containing an LMRD code.
Our main theorem states that this construction is optimal even without
the restriction of containing an LMRD code and,
moreover, that all subspace codes of maximum possible size $257$ are
extended LMRD codes.

\begin{theorem}
\label{main_thm}
 $A_2(8,6;4) = 257$, and up to isomorphism there are two maximum codes, both are extended LMRD codes.
\end{theorem}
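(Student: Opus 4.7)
My plan is to prove the matching upper bound $A_2(8,6;4)\le 257$ -- the lower bound being already supplied by the extended LMRD construction in Echelon--Ferrers form -- and then to classify the extremal codes up to isomorphism. Since $\sdist(U,W)\ge 6$ for solids $U,W\in\Gqnk$ is equivalent to $\dim(U\cap W)\le 1$, the task becomes: show that at most $257$ solids in $\operatorname{PG}(7,2)$ can be arranged so that no two share a plane, and that the extremal configurations fall into exactly two isomorphism classes.

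For the upper bound, Inequality~(\ref{ie_r_johnson}) only yields $272$, so a strengthening via local structure is unavoidable. My approach is to combine all natural local constraints on a hypothetical $(8,N,6;4)_2$ code $\mathcal{C}$ into an integer linear program whose infeasibility at $N=258$ yields the bound. The elementary constraints are: each plane of $V=\F_2^8$ lies in at most one codeword; the codewords through any fixed line $L$ descend in $V/L\cong\F_2^6$ to a partial $3$-spread, so there are at most $A_2(6,6;3)=9$ of them; the codewords through any fixed point $P$ descend in $V/P\cong\F_2^7$ to a partial $3$-spread, bounded by $A_2(7,6;3)=17$; and the codewords lying in a fixed hyperplane form a $(7,?,6;4)_2$ CDC, bounded by $A_2(7,6;4)=17$ via duality. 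To turn these into a tractable ILP I would first classify, up to a chosen reference subgroup of $\GL(V)$, the possible \enquote{local pictures} of $\mathcal{C}$ at a carefully chosen substructure (for example, the intersection pattern of $\mathcal{C}$ with a fixed hyperplane, or the configurations of codewords through a fixed line), then use orbit multiplicities of $4$-subspaces as ILP variables and add the pairwise distance constraints together with the local bounds above; solving or LP-certifying infeasibility of the program at $N=258$ completes the proof of the bound.

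For the classification, once $|\mathcal{C}|=257$ is established, I would use the sharpness pattern of the ILP together with the Etzion--Silberstein LMRD bound (which, for codes containing an LMRD, already gives exactly $257$) to force $\mathcal{C}$ to contain a lifted $4\times 4$ rank-distance-$3$ MRD subcode of size $256$ over $\F_2$. The remaining codeword is then forced to be the unique solid complementary to the base of the lift, so every optimal $\mathcal{C}$ is indeed an extended LMRD code; the number of $\GL(V)$-orbits of optimal codes then matches the number of isomorphism classes of the underlying binary $4\times 4$ MRD codes with rank distance $3$, which a separate finite classification argument should pin down to two. The main obstacle throughout is the upper bound: the purely combinatorial local constraints are far from sufficient on their own, so the proof hinges on choosing substructures whose classification is rich enough to tighten the LP relaxation all the way down to $257$, yet tractable enough to enumerate and feed into a solver -- this is where the computational heart of the paper lies.
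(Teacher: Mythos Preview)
Your overall strategy---fix a substructure, classify its possible configurations inside an optimal code, and feed the result into an (I)LP---is exactly the route the paper takes, with the hyperplane $\widetilde{H}$ (and later an additional point $\widetilde{P}\nleq\widetilde{H}$) playing the role of that substructure. So the high-level plan is sound. However, two of your concrete claims are wrong, and the second one breaks the classification argument.

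First, a small slip: if $L$ is a line ($2$-space) and $U,W\in\mathcal{C}$ both contain $L$, then $\dim(U\cap W)\ge 2$, contradicting $d=6$. So through any line there is at most \emph{one} codeword, not $A_2(6,6;3)=9$; in $V/L$ the images would be $2$-spaces, not planes, and they could not even be pairwise disjoint. The paper accordingly uses the constraint $\sum_{U\ni L}x_U\le 1$ for every $2$-space (and dually for every $6$-space), together with the point/hyperplane bound $\le 17$.

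The serious gap is in your classification step. You write that once $\mathcal{C}$ is known to contain an LMRD subcode of size $256$, ``the remaining codeword is then forced to be the unique solid complementary to the base of the lift'', and that the two isomorphism types of optimal codes correspond to two isomorphism types of binary $4\times4$ MRD codes of rank distance $3$. Both assertions are false. There is a \emph{unique} such MRD code up to isomorphism (the Gabidulin code; this is Theorem~\ref{thm_mrd_4_4_3} in the paper). The two isomorphism types of $(8,257,6;4)_2$ codes arise not from two MRD codes but from the action of the automorphism group of the lifted Gabidulin code on the $451$ solids that meet every lifted codeword in at most a point: these are exactly the solids meeting the special solid $S$ in at least a plane, and they split into two orbits---$\{S\}$ itself and the $450$ solids meeting $S$ in a plane (Corollary~\ref{cor_isomorphism_types}). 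So the extra codeword is \emph{not} forced to be $S$; it can equally well be any solid sharing a plane with $S$, and that is precisely what produces the second isomorphism type. Your reasoning as stated would yield only one optimal code, not two.

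Finally, your appeal to ``the sharpness pattern of the ILP together with the Etzion--Silberstein LMRD bound'' to force an LMRD subcode is too optimistic as a stand-alone argument. In the paper this step (Section~\ref{subsec_phase_4}) still requires targeted BLP computations: one must add constraints singling out a candidate solid $S$ disjoint from (or nearly disjoint from) the fixed $31$-configuration, verify that $x_S=1$ is forced, and then verify that no further codeword meets $S$ in a point. None of this follows from mere LP tightness.
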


Theorem~\ref{main_thm} is the main theorem of this paper.

\begin{fact}[{\cite[Theorem~3.3(i)]{MR3543542}}]
\label{fact}
If $v=2k \ge 8$ then $A_q(v,v-2)=A_q(v,v-2;k)$.
\end{fact}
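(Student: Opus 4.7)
The plan is to show $A_q(v,v-2)\le A_q(v,v-2;k)$, the reverse inequality being trivial. I would transform any $(v,N,v-2)_q$ MDC $\mathcal{C}$ with $v=2k\ge 8$ into a $(v,N,v-2;k)_q$ CDC of the same size by iteratively equalising its codeword dimensions.

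The starting point is a tight dimensional constraint: for any $U,W\in\mathcal{C}$, combining $\sdist(U,W)=\dim(U+W)-\dim(U\cap W)\ge 2k-2$ with $\dim(U\cap W)\ge 0$ and $\dim(U+W)\le 2k$ yields $\dim(U\cap W)\le 2$ and $2k-2\le\dim U+\dim W\le 2k+2$. Hence two codewords of dimension $\le k-2$ (or both of dimension $\ge k+2$) cannot coexist, so $\mathcal{C}$ contains at most one codeword of dimension outside $\{k-1,k,k+1\}$ on each side. Such outliers force every other codeword to sit very close to the middle dimension (e.g., a codeword of dimension $k-2$ forces all others to have dimension $\ge k$), and the resulting subcases can be closed directly by plugging in known upper bounds on partial $(k-1)$-spreads and related CDC parameters, yielding $|\mathcal{C}|\le A_q(v,v-2;k)$ in these narrow configurations.

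In the generic case, all dimensions lie in $\{k-1,k,k+1\}$. By the duality $\mathcal{C}\mapsto\mathcal{C}^\perp$, which preserves size and distance while swapping dimensions $k-1$ and $k+1$, it suffices to eliminate a codeword $U\in\mathcal{C}$ of dimension $k-1$. I would replace $U$ by one of the $\gaussmnum{k+1}{1}{q}$ $k$-dimensional overspaces $U'\supset U$, indexed by lines $L$ of $V/U$. A direct computation gives $\dim(U'\cap W)=\dim(U\cap W)$ when $L\not\subseteq(U+W)/U$ and $\dim(U\cap W)+1$ otherwise, so $L$ is forbidden by $W$ exactly when the bound $\dim(U\cap W)\le(\dim W-k+1)/2$ coming from $\sdist(U,W)\ge 2k-2$ is tight and $L\subseteq(U+W)/U$. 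The binding contributors are $W$ with $\dim W=k-1$ (each forbidding $\gaussmnum{k-1}{1}{q}$ lines in $V/U$) and $W$ with $\dim W=k+1$, $\dim(U\cap W)=1$ (each forbidding $\gaussmnum{k}{1}{q}$ lines); codewords of dimension $k$ impose no constraint, because then $\dim(U'\cap W)\le 1$ automatically.

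The main obstacle is the counting in this replacement step: a single dim-$(k+1)$ codeword with tight intersection already blocks $\gaussmnum{k}{1}{q}$ of the $\gaussmnum{k+1}{1}{q}$ candidate lines, leaving very little room for a union bound. The proof must therefore exploit that several tight intersections are mutually incompatible: two such $W_1,W_2$ whose forbidden flats share a common line modulo $U$ would pin down $\sdist(W_1,W_2)$ and $\dim(U\cap W_1),\dim(U\cap W_2)$ in a way that contradicts the MDC condition on $\{W_1,W_2,U\}$. The hypothesis $v=2k\ge 8$, i.e. $k\ge 4$, is precisely what is needed for this incompatibility argument to close (the analogous statement indeed fails for smaller $v$); iterating the replacement then produces a CDC of the same size as $\mathcal{C}$.
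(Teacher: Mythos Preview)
The paper does not prove this statement at all; it is quoted verbatim as a fact from \cite[Theorem~3.3(i)]{MR3543542} and used as a black box. There is therefore no ``paper's own proof'' to compare against, and your proposal must stand or fall on its own.

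As it stands, the proposal is a strategy sketch rather than a proof, and the central step is genuinely missing. You correctly identify that the only delicate case is when all codeword dimensions lie in $\{k-1,k,k+1\}$ and you want to replace a $(k-1)$-dimensional $U$ by some $k$-dimensional $U'\supset U$. You also correctly compute which points of $V/U$ are forbidden by each other codeword. But then you write that ``the main obstacle is the counting in this replacement step'' and propose to ``exploit that several tight intersections are mutually incompatible''---and stop. That is exactly the lemma that needs to be proved, and you have not proved it. Concretely: two $(k-1)$-dimensional codewords $W_1,W_2$ (disjoint from $U$ and from each other) project to $(k-1)$-dimensional subspaces $\bar W_1,\bar W_2$ of the $(k+1)$-dimensional space $V/U$, and a short computation shows $\dim(\bar W_1\cap\bar W_2)\ge k-3$; for $k\ge 4$ these images overlap heavily rather than being ``mutually incompatible''. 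A single $(k+1)$-dimensional codeword with tight intersection already kills a hyperplane of $V/U$. So a naive union bound fails badly, and the vague incompatibility claim you gesture at does not obviously rescue it. You would need a precise structural statement about how the forbidden subspaces of $V/U$ can sit together, and you have not supplied one.

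The outlier cases (a codeword of dimension $\le k-2$ or $\ge k+2$) are also only asserted to ``close directly by plugging in known upper bounds'', with no indication of which bounds or why they suffice. Since these cases in the cited reference do require some care, this too is a gap.
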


Theorem~\ref{main_thm} and Fact~\ref{fact}
  together give the maximum cardinality
  in the corresponding mixed-dimension case:

\begin{corollary}
$A_2(8,6)=257$.
\end{corollary}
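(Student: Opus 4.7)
My plan is to derive the corollary as a direct combination of the two results quoted immediately above it. Fact~\ref{fact} states that for $v = 2k \ge 8$ one has $A_q(v, v-2) = A_q(v, v-2; k)$, so the first thing to do is verify that the parameters $q = 2$, $v = 8$, $k = 4$ satisfy its hypothesis. They do, since $v = 2k = 8$ and $v \ge 8$, and moreover $v - 2 = 6$. Applying Fact~\ref{fact} in this instance yields $A_2(8, 6) = A_2(8, 6; 4)$, and the second step is simply to substitute Theorem~\ref{main_thm} on the right-hand side to arrive at the value $257$.

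There is essentially no obstacle here, since all the substantive work is encapsulated in the two cited results. It is worth emphasizing, however, that the easy direction $A_2(8, 6) \ge A_2(8, 6; 4)$ is trivial (every constant-dimension code is a fortiori a mixed-dimension code), whereas the matching upper bound $A_2(8, 6) \le A_2(8, 6; 4)$ --- which a priori is not obvious, since an extremal mixed-dimension code could in principle contain codewords of dimensions other than $4$ --- is precisely the nontrivial content of Fact~\ref{fact}. Once that is accepted and Theorem~\ref{main_thm} is in hand, the corollary is an immediate one-line consequence, and no further combinatorial argument is required.
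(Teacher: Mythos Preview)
Your proof is correct and follows exactly the same approach as the paper: the corollary is stated as an immediate consequence of Theorem~\ref{main_thm} and Fact~\ref{fact}, combining $A_2(8,6)=A_2(8,6;4)$ with $A_2(8,6;4)=257$.
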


Given Theorem~\ref{main_thm},
one may ask whether there exists an integer $k\ge 4$ with $A_2(2k,2k-2;k)>2^{2k}+1$.

The remaining part of the paper is structured as follows. In Section~\ref{sec_preliminaries} we provide the necessary preliminaries on 
lifted maximum rank distance codes, acting symmetry groups, and upper bounds for code sizes based on the number of 
incidences of codewords with a fixed subspace. As in~\cite{MR3329980}, we want to apply integer linear programming methods 
in order to determine the exact maximum size of CDCs with the specified parameters. Since this algorithmic approach 
suffers from the presence of a large symmetry group\footnote{Algorithmic methods taking into account known symmetries of integer linear 
programming formulations automatically are presented in the literature. However, we are not aware of any paper, where those approaches 
have been successfully applied to compute tightened upper bounds for CDCs.}, we use the inherent symmetry to prescribe 
some carefully chosen substructures up to isomorphism. A general outline of the proof of Theorem~\ref{main_thm} is presented in Section~\ref{sec_general}.
The substructures involved are described in Section~\ref{sec_substructures} and the 
integer linear programming formulations are described in Section~\ref{sec_ILP}. All these parts are put together in the proof of our 
main theorem in Section~\ref{sec_main_thm}.  

\section{Preliminaries}
\label{sec_preliminaries}
Let $m,n$ be positive integers.  The \emph{rank distance} of
$m\times n$ matrices $A$ and $B$ over $\F_q$ is defined as
$\rdist(A,B)=\operatorname{\rk}(A-B)$. The rank distance provides a
metric on $\F_q^{m\times n}$.  Any subset $C$ of the metric space
$(\F_q^{m\times n},\rdist)$ is called \emph{rank metric code}.  Its
minimum distance $d$ is the minimum of the rank distances between pairs
of distinct codewords (defined for $\#C \ge 2$). If $C$ is a subspace
of the $\F_q$-vector space $\F_q^{m\times n}$, then $C$ is called
\emph{linear}. If $m\le n$ (otherwise transpose), then
$\# C\le q^{(m-d+1)n}$ by~\cite[Theorem
5.4]{delsarte1978bilinear}. Codes achieving this bound are called
\emph{maximum rank distance} (MRD) codes.  In fact, MRD codes do
always exist. A suitable construction has independently been found
in~\cite{delsarte1978bilinear,gabidulin1985theory,roth1991maximum}. Today
these codes are known as \emph{Gabidulin codes}.  In the square case
$m=n$, after the choice of an $\F_q$-basis of
$\F_{q^n}$ the Gabidulin code is given by the matrices representing
the $\F_q$-linear maps given by the $q$-polynomials
$a_0 x^{q^0}+ a_1 x^{q^1} +\dots+ a_{n-d} x^{q^{n-d}} \in
\F_{q^n}[x]$. The lifting map
$\Lambda\colon \F_q^{m\times n}\to \gaussmset{\F_q^{m+n}}{m}$ maps an
$(m\times n)$-matrix $A$ to the row space $\langle (I_m | A)\rangle$,
where $I_m$ denotes the $m\times m$ identity matrix.  The mapping
$\Lambda$ is injective and its image is given by all $m$-dimensional
subspaces of $\F_q^{m\times n}$ having trivial intersection with the
special subspace $S=\langle e_{m+1},\dots,e_{m+n}\rangle$ of
$\F_q^{m+n}$ ($e_i$ denoting the $i$th unit vector).  In fact, the
lifting map defines an isometry from
$(\F_q^{m\times n},2\rdist)$ into
$(\operatorname{L}(\F_q^{m+n}),\sdist)$. Of particular interest are
the LMRD codes, i.e., CDCs obtained by lifting MRD codes, which are
CDCs of fairly large, though not of maximal size.

Although we use the algebraic dimension $v$ instead of the geometric
dimension $v-1$ in this paper, we adopt the use of
  geometric language: Abbreviating $k$-dimensional subspaces as
$k$-spaces, we call $1$-spaces points, $2$-spaces lines, $3$-spaces
planes, $4$-spaces solids, and $(v-1)$-spaces hyperplanes.

For dimensions $v\ge 3$ the automorphism group of the metric space
$({\PFqn},\sdist)$ is generated by $\PGammaL(\Fqn)$ and the
polarity $\pi$.  It carries the structure of a
semidirect product
$\PGammaL(\Fqn)\rtimes \langle \pi\rangle \cong \PGammaL(v,q)\rtimes
\mathbb{Z}/2\mathbb{Z}$.  Hence, for classifications of CDCs in
$\gaussmset{V}{k}$ up to isomorphism, the relevant acting group is
$\PGammaL(\Fqn)$, except for the case $v = 2k$ in which it is the
larger group $\PGammaL(\Fqn)\rtimes \langle \pi\rangle$.

In order to describe suitable substructures of $(8,N,6;4)_2$ codes with $\textit{large}$ cardinality $N$, we will consider incidences with 
fixed subspaces. To this end, let $\I{\mathcal{S}}{X}$ be the set of subspaces in $\mathcal{S} \subseteq \PFqn$ that are incident with $X \le \Fqn$, i.e.,
$\I{\mathcal{S}}{X} = \{U \in \mathcal{S} \mid U \le X \,\lor\, X \le U\}$. As special subspaces we explicitly label a point 
$\widetilde{P}=\langle (0,0,0,0,0,0,0,1) \rangle$ and a hyperplane $\widetilde{H}=\{x \in V \mid x_8 = 0\}$.
Note that $\widetilde{P}$ and $\widetilde{H}$ are not incident. By $\iota: \mathbb{F}_2^7 \rightarrow \widetilde{H}$ we denote the canonical 
embedding, which we will apply to subspaces and sets of subspaces.
 
To keep the paper self-contained, we
restate upper bounds for $\# \I{\mathcal{S}}{X}$ and $N$ from the earlier conference 
paper~\cite{new_bounds_subspaces_codes} with their complete but short proofs.

\begin{lemma}\label{lem:deg_upper_bound}
Let $\mathcal{C}$ be a $(v,\#\mathcal{C},d;k)_q$ CDC and $X \le \Fqn$. Then we have
$\#\I{\mathcal{C}}{X}\le A_q(\dim(X),d;k)$ if $\dim(X)\ge k$ and $\#\I{\mathcal{C}}{X}\le A_q(v-\dim(X),d;k-\dim(X))$ otherwise.   
\end{lemma}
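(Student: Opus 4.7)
The plan is to split on the two cases in the statement and reduce each one to a smaller CDC.

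First I would observe that since every codeword $U \in \mathcal{C}$ has dimension $k$, exactly one of the two incidence relations $U \le X$ or $X \le U$ can hold nontrivially, depending on whether $\dim(X) \ge k$ or $\dim(X) \le k$ (with both coinciding in the boundary case $\dim(X) = k$, where they force $U = X$). This cleanly partitions the analysis.

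In the case $\dim(X) \ge k$, every element of $\I{\mathcal{C}}{X}$ is a $k$-subspace contained in $X$. Restricting attention to the ambient space $X \cong \F_q^{\dim(X)}$, the set $\I{\mathcal{C}}{X}$ is itself a CDC with constant dimension $k$ and minimum distance at least $d$ (since it inherits the subspace distance from $\mathcal{C}$). The bound $\#\I{\mathcal{C}}{X}\le A_q(\dim(X),d;k)$ is then immediate from the definition of $A_q$.

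In the case $\dim(X) < k$, every element of $\I{\mathcal{C}}{X}$ contains $X$, so I would pass to the quotient space $V/X$ of dimension $v - \dim(X)$. The assignment $U \mapsto U/X$ is a bijection between $k$-subspaces of $V$ containing $X$ and $(k-\dim(X))$-subspaces of $V/X$. For $U,W \in \I{\mathcal{C}}{X}$ one has $U \cap W \supseteq X$, hence $\dim((U/X) \cap (W/X)) = \dim(U\cap W) - \dim(X)$, from which a one-line computation gives $\sdist(U/X, W/X) = \sdist(U, W) \ge d$. Thus the image in $V/X$ is a CDC of constant dimension $k-\dim(X)$ and minimum distance at least $d$, yielding $\#\I{\mathcal{C}}{X} \le A_q(v-\dim(X), d; k-\dim(X))$.

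There is no real obstacle: the only step that requires a moment of care is verifying that taking quotients preserves (or at least does not decrease) the subspace distance on codewords containing $X$, but this follows directly from the dimension formula. Both sub-bounds together give the claim.
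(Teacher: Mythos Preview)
Your proof is correct and follows essentially the same approach as the paper. The only difference is cosmetic: for the case $\dim(X) < k$, the paper fixes a complement $V'$ of $X$ in $V$ and writes each $U_i = X \oplus U_i'$ with $U_i' \le V'$, whereas you pass to the quotient $V/X$; the two constructions are equivalent via the canonical isomorphism $V' \cong V/X$, and in fact both give the equality $\sdist(U_i',U_j') = \sdist(U_i,U_j)$ (the paper only records the inequality it needs).
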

\begin{proof}
Note that $\I{\mathcal{C}}{X}$ is a $(\dim(X),\#\I{\mathcal{C}}{X},d;k)_q$ CDC.
For the second part we write $V=X\oplus V'$ and $U_i=X\oplus U_i'$ for all $U_i\in \I{\mathcal{C}}{X}$. With this we have 
$\sdist(U_i,U_j)=2k-2\dim(U_i\cap U_j)\le 2\left(k-\dim(X)\right)-2\dim(U_i'\cap U_j')=\sdist(U_i',U_j')$.

\end{proof}

\begin{corollary}
\label{cor_one_incidence}
Let $\mathcal{C}$ be a $(2k,\#\mathcal{C},2k-2;k)_q$ CDC for $k\ge 1$.
Then $\#\I{\mathcal{C}}{H} \le q^k+1$ and $\#\I{\mathcal{C}}{P} \le q^k+1$ for all hyperplanes $H$ and points $P$.
\end{corollary}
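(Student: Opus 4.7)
My plan is to reduce both inequalities, via Lemma~\ref{lem:deg_upper_bound}, to a single estimate on partial $(k-1)$-spreads in $\F_q^{2k-1}$, and then invoke the classical sharp bound for this quantity. For a point $P$, $\dim(P)=1<k$ (the degenerate small-$k$ cases I would verify separately, since e.g.\ for $k=1$ the claim is immediate from $\#\mathcal{C}\le q+1$), so the Lemma yields $\#\I{\mathcal{C}}{P}\le A_q(v-1,d;k-1)=A_q(2k-1,2k-2;k-1)$. Since $2(k-1)$ is already the largest possible subspace distance between two $(k-1)$-subspaces, a $(2k-1,\cdot,2k-2;k-1)_q$ CDC is precisely a set of $(k-1)$-subspaces of $\F_q^{2k-1}$ with pairwise trivial intersection, i.e., a partial $(k-1)$-spread.

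For a hyperplane $H$, $\dim(H)=2k-1\ge k$, so the Lemma gives $\#\I{\mathcal{C}}{H}\le A_q(2k-1,2k-2;k)$. Inside the $(2k-1)$-space $H$ two distinct $k$-subspaces always meet in at least a point (by the dimension formula), while the distance hypothesis $\sdist\ge 2k-2$ forces them to meet in at most a point; hence intersections in $H$ are always exactly a point. Passing to orthogonal complements inside $H$ (with respect to any non-degenerate symmetric bilinear form on the $(2k-1)$-space $H$) sends codewords to $(k-1)$-subspaces that now pairwise intersect trivially, giving a distance- and cardinality-preserving bijection between $(2k-1,N,2k-2;k)_q$ and $(2k-1,N,2k-2;k-1)_q$ CDCs. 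Thus $A_q(2k-1,2k-2;k)=A_q(2k-1,2k-2;k-1)$, and the hyperplane case reduces to the same partial-spread quantity as the point case.

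The hard part will be the sharp estimate $A_q(2k-1,2k-2;k-1)\le q^k+1$ for partial $(k-1)$-spreads in $\F_q^{2k-1}$. The elementary point-counting bound $(q^{2k-1}-1)/(q^{k-1}-1)$ is already strictly larger than $q^k+1$ in the range relevant to the paper, so one genuinely needs Beutelspacher's classical refinement for partial $t$-spreads in ambient algebraic dimension $2t+1$ (equivalently, the modern divisibility argument via $q^{k-2}$-divisible linear codes alluded to in the introduction of this paper). I would cite that bound to deduce both claims simultaneously, finishing the proof.
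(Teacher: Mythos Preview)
Your proposal is correct and follows essentially the same route as the paper: both apply Lemma~\ref{lem:deg_upper_bound} to reduce the point and hyperplane cases to $A_q(2k-1,2k-2;k-1)$ and $A_q(2k-1,2k-2;k)$ respectively, identify these two quantities via duality, and then cite Beutelspacher's partial-spread bound to obtain $q^k+1$. The only cosmetic difference is that you re-derive the equality $A_q(2k-1,2k-2;k)=A_q(2k-1,2k-2;k-1)$ by an explicit orthogonal-complement argument inside $H$, whereas the paper simply invokes the general identity $A_q(v,d;k)=A_q(v,d;v-k)$ already stated in the preliminaries.
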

\begin{proof}
We have $A_q(v,2k;k) = \frac{q^v-q}{q^k-1}-q+1$ for $v \equiv 1 \pmod{k}$ and $2 \le k \le v$, see~\cite{MR0404010}, so that 
Lemma~\ref{lem:deg_upper_bound} gives $\#\I{\mathcal{C}}{P} \le A_q(2k-1,2k-2;k-1)=q^k+1$ 
and 
$\#\I{\mathcal{C}}{H} \le A_q(2k-1,2k-2;k)=A_q(2k-1,2k-2;k-1)=q^k+1$. 

\end{proof}

In particular, Corollary~\ref{cor_one_incidence} shows that each point and hyperplane is incident with at most $17$ codewords of an $(8,N,6;4)_2$ CDC.
The next lemma refines this counting by including points which are not incident with a fixed hyperplane.

\begin{lemma}
\label{lem_minilemma}
Let $\mathcal{C}$ be an $(8,\#\mathcal{C},6;4)_2$ CDC of size $\#\mathcal{C} \ge 255$.
For each hyperplane $H$, there is a point $P' \not \le H$ with $\#\I{\mathcal{C}}{P'} \ge 14$.
Moreover, if $\#\I{\mathcal{C}}{P} \le 16$ for all points $P$, then for each hyperplane $H$ there is a point $P''\not \le H$ with $\#\I{\mathcal{C}}{P''} \ge 15$.
\end{lemma}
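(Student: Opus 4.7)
My plan is to fix a hyperplane $H$ and apply a double-counting argument to incidences between codewords and points not lying in $H$.

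First I would bound $a := \#\mathcal{I}(\mathcal{C}, H)$, the number of codewords contained in $H$. Applying Corollary~\ref{cor_one_incidence} (with $v=8$, $k=4$) gives $a \le 2^4+1 = 17$. Next I would use the geometric observation that any codeword $U \in \mathcal{C}$ with $U \not\le H$ meets $H$ in a plane: $\dim(U+H)\le 8$ forces $\dim(U\cap H)\ge 3$, while $U\not\le H$ rules out equality with $4$. Such a $U$ therefore contributes exactly $\gaussmnum{4}{1}{2}-\gaussmnum{3}{1}{2}=15-7=8$ points outside $H$.

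The central double count then reads
\[
\sum_{\substack{P\le V,\ \dim P=1 \\ P \not\le H}} \#\mathcal{I}(\mathcal{C}, P) \;=\; 8\bigl(\#\mathcal{C}-a\bigr) \;\ge\; 8\,(255-17) \;=\; 1904.
\]
Since $V$ contains $\gaussmnum{8}{1}{2}-\gaussmnum{7}{1}{2}=128$ points outside $H$, the average value of $\#\mathcal{I}(\mathcal{C},P)$ over these points is at least $1904/128=14.875$. Pigeonhole then produces a point $P \not\le H$ with $\#\mathcal{I}(\mathcal{C},P)\ge 15$, which simultaneously establishes both assertions of the lemma.

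The proof presents essentially no obstacle beyond verifying the bound $a\le 17$ from Corollary~\ref{cor_one_incidence}; once that is in place, the rest is a one-line arithmetic step. I remark that the pigeonhole here actually yields the stronger conclusion $\ge 15$ unconditionally, so the extra hypothesis $\#\mathcal{I}(\mathcal{C},P)\le 16$ appearing in the second assertion is not needed for this argument---both parts can be disposed of in the single calculation above. If the authors' own proof distinguishes the two parts, I would expect them to argue the first via a coarser double count (for instance, bounding total incidences by $15\#\mathcal{C}$ minus incidences on the $127$ points of $H$, each $\le 17$ by Corollary~\ref{cor_one_incidence}, which gives only average $\ge 1666/128 \approx 13.02$ and hence only $\ge 14$), and then invoke the extra assumption for the sharper second part.
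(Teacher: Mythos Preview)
Your argument is correct and takes a genuinely different route from the paper. The paper double-counts \emph{all} point--codeword incidences, writing $\sum_{P}\#\mathcal{I}(\mathcal{C},P)=15\cdot\#\mathcal{C}\ge 3825$, and then bounds the contribution of the $127$ points inside $H$ by $127\cdot 17$ (respectively $127\cdot 16$ under the extra hypothesis), leaving at least $1666$ (respectively $1793$) for the $128$ points outside $H$; this gives only averages of roughly $13.02$ and $14.01$, hence the two separate conclusions $\ge 14$ and $\ge 15$. You instead restrict the double count to points outside $H$ from the start and use the \emph{hyperplane} bound $\#\mathcal{I}(\mathcal{C},H)\le 17$ (rather than per-point bounds inside $H$) to control what is discarded. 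Because a codeword in $H$ has all $15$ of its points inside $H$, throwing away $\le 17$ codewords costs you at most $17\cdot 15$ incidences on the left but nothing on the right, which is much tighter than the paper's loss of up to $127\cdot 17$. This is why your single calculation reaches $\ge 15$ unconditionally and the paper's does not. Your closing remark anticipating exactly this coarser version is spot on; your approach simply renders the case distinction in the lemma statement unnecessary.
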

\begin{proof}
Let $H$ be a hyperplane and $\mathcal{P} = \gaussmset{\F_2^8}{1}$ be the set of points.
Double counting of the set $\{(P,U) \in \mathcal{P} \times \mathcal{C} \mid P \leq U\}$ gives
\[
\!\!\!\!\!\sum_{P \in \I{\mathcal{P}}{H}}\!\!\!\!\! \#\I{\mathcal{C}}{P} + \!\!\!\!\!\sum_{P \not\in \I{\mathcal{P}}{H}} \!\!\!\!\!\#\I{\mathcal{C}}{P} = \#\mathcal{C} \cdot \gaussmnum{4}{1}{2} \geq 255\cdot 15\text{.}
\]
By Corollary~\ref{cor_one_incidence}, $\#\I{\mathcal{C}}{P} \le 17$ for all points $P$.
Assuming $\#\I{\mathcal{C}}{P} \le 13$ for all points $P\not\le H$, the left hand side is $\le 127 \cdot 17 + 128 \cdot 13 = 255\cdot 15 - 2$, which is a contradiction.
If $\#\I{\mathcal{C}}{P} \leq 16$ for all points $P$, the assumption $\#\I{\mathcal{C}}{P} \leq 14$ for all $P\not\le H$ leads to a left hand side $\le 127 \cdot 16 + 128 \cdot 14 = 255 \cdot 15 - 1$, which is again a contradiction.

\end{proof}

Furthermore we need the following lemma to split a difficult problem into multiple small problems.

\begin{lemma}\label{lem:split}
  Let $X$ be a finite set and $f\colon 2^X\to\{0,1\}$ be a function. A bijection $\pi\colon X\to X$ is called an automorphism (with respect to $f$) if 
  $f(S)=f(\pi(S))$ for all $S\subseteq X$. Let $\Gamma$ be a group of automorphisms, $T=\{t_1, \ldots, t_m\}$ be a transversal of $\Gamma$ 
  acting on $X$, where the corresponding orbit sizes are decreasing, and $\tau\colon X\to \{1,\dots, m\}$ such that $x\in X$ is in the same orbit 
  as $t_{\tau(x)}$. If $\tilde{S}\subseteq X$ 
  and $i=\min\{\tau(x) \mid x\in \tilde{S}\}$, then there exists an automorphism $\gamma\in\Gamma$ with $\{t_i\}\subseteq \gamma(\tilde{S})$, 
  $f(\tilde{S})=f(\gamma(\tilde{S}))$, and $\min\{\tau(x) \mid x\in \gamma(\tilde{S})\}=i$.
\end{lemma}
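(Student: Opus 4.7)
The plan is to exploit the two key facts about the group action: first, the orbits of $\Gamma$ partition $X$ and are exactly the fibers of $\tau$ (by definition of the transversal $T$); second, every $\gamma\in\Gamma$ preserves each orbit setwise, hence satisfies $\tau(\gamma(x))=\tau(x)$ for every $x\in X$.

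First I would pick a witness $x^*\in \tilde{S}$ realizing the minimum, i.e.\ $\tau(x^*)=i$; such $x^*$ exists because $i=\min\{\tau(x)\mid x\in\tilde{S}\}$ and $\tilde{S}$ is finite and nonempty whenever $i$ is defined. Since $\tau(x^*)=i$ means precisely that $x^*$ lies in the same $\Gamma$-orbit as $t_i$, the transitivity of $\Gamma$ on this orbit yields some $\gamma\in\Gamma$ with $\gamma(x^*)=t_i$. This $\gamma$ will be the automorphism we output.

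Next I would verify the three conclusions in turn. The inclusion $\{t_i\}\subseteq\gamma(\tilde{S})$ follows directly from $t_i=\gamma(x^*)\in\gamma(\tilde{S})$. The equality $f(\tilde{S})=f(\gamma(\tilde{S}))$ is immediate from the assumption that $\gamma$ is an automorphism of $f$. Finally, for the statement about the minimum, the observation $\tau\circ\gamma=\tau$ implies $\{\tau(y)\mid y\in\gamma(\tilde{S})\}=\{\tau(\gamma(x))\mid x\in\tilde{S}\}=\{\tau(x)\mid x\in\tilde{S}\}$, so taking minima on both sides gives $\min\{\tau(y)\mid y\in\gamma(\tilde{S})\}=i$.

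The whole argument is essentially an unpacking of the definition of a group action via orbits together with the automorphism property of $\Gamma$, so there is no real obstacle; the only thing worth being careful about is noting that the decreasing-orbit-size hypothesis on the ordering of $T$ is not actually needed for this lemma itself (it is the convention that makes the smallest index $i$ in the later algorithmic use correspond to the largest orbit, which is useful for the splitting strategy but plays no role in the proof).
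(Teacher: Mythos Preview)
Your proof is correct and follows exactly the same approach as the paper's: choose $x\in\tilde{S}$ with $\tau(x)=i$, pick $\gamma\in\Gamma$ sending $x$ to $t_i$, and use the $\Gamma$-invariance of $\tau$ to conclude. Your write-up simply makes explicit the verifications that the paper's two-line proof leaves to the reader, and your remark that the decreasing-orbit-size hypothesis is unused in the proof itself is accurate.
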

\begin{proof}
  Choose $x\in X$ with $\tau(x)=i$ and $\gamma\in\Gamma$ with $\gamma(x)=t_i$. Note that $\tau(\gamma'(x'))=\tau(x')$ for all 
  $\gamma'\in\Gamma$ and all $x'\in X$.
  
\end{proof}

This lemma will be applied in Section~\ref{sec_main_thm} to exploit the symmetry for the computation of representatives of cliques of maximal size as well as for the solving of a binary linear program, cf. Section~\ref{sec_ILP}.
If $G=(V,E)$ is a graph with nontrivial automorphism group $\Aut(G)$, we use Lemma~\ref{lem:split} with $X=V$, $f$ defined by $f(S)=1$ iff $S$ is a clique, and $\Gamma \le \Aut(G)$.
Let $T$ be a transversal for the action of $\Gamma$ on $V$.
Then any nonempty clique of size $c$ is in the same orbit as the clique $\{t\} \dot\cup S'$ with $t \in T$ and $\#S' = c-1$.
This argument can also be applied recursively.

\section{General outline of the proof of Theorem~\ref{main_thm}} \label{sec_general}
In the first phase we try to extend the $715+14445$ hyperplane configurations from Theorems~\ref{theo:7_17_6_3_2} and~\ref{theo:7_16_6_3_2}
to $(8,N,6;4)_2$ CDCs with $N \ge 257$. This is accomplished by using the linear programming relaxation of the integer linear programming model from Lemma~\ref{lem:phase_1}.
It turns out that such an extension is not possible for all but $38$ of those hyperplane configurations.

For the remaining $38$ hyperplane configurations the integer linear programming model for the extension to an $(8,N,6;4)_2$ CDC with $N \ge 257$ is
used. This test fails for all but seven of the $38$ cases.

In the second phase we try to enlarge the remaining hyperplane configurations to larger substructures.
Overall, we get $73\,234$ possible $31$-point-hyperplane configurations.

In the third phase it is again tested if these configurations can be extended to $(8,N,6;4)_2$ CDCs with $N \ge 257$.
For this, the linear programming relaxation of the integer linear programming model from Lemma~\ref{lem:phase_2} is used.
All but three hyperplane configurations with $195 + 98 + 240$ $31$-point-hyperplane configurations fail this test.

Finally, the integer linear programming model shows that from the remaining $195 + 98 + 240$ cases exactly two
give $(8,257,6;4)_2$ CDCs. All other configurations lead to smaller codes.

\section{Substructures of $(8,N,6;4)_2$ CDCs for $N\ge 257$}
\label{sec_substructures}

Let $\mathcal{C}$ be an $(8,N,6;4)_2$ CDC with $N\ge 257$. From Corollary~\ref{cor_one_incidence} we 
conclude $\#\I{\mathcal{C}}{H}\le 17$ for any hyperplane $H$. If $\#\I{\mathcal{C}}{H}\le 15$ for each 
hyperplane $H$, then $\# \mathcal{C}\le \gaussmnum{8}{1}{2}\cdot 15 / \gaussmnum{4}{1}{2}=255<257$, since every solid is 
contained in $\gaussmnum{8-3}{7-4}{2}=\gaussmnum{4}{1}{2}$ hyperplanes. So, there exists at least one hyperplane $H$ 
with $\#\I{\mathcal{C}}{H}\in\{16,17\}$. Since $\PGammaL(\F^{8}_{2}) = \operatorname{GL}(\F^{8}_{2})$ acts transitively on the set of hyperplanes, we can assume 
$\#\I{\mathcal{C}}{\widetilde{H}}\in\{16,17\}$. Then $\left(\iota^{-1}\left(\I{\mathcal{C}}{\widetilde{H}}\right)\right)^\perp$, i.e., the corresponding dual in $\widetilde{H}$,  
is a set of pairwise disjoint planes in $\widetilde{H}$, i.e., a $(7,N',6;3)_2$ CDC with $N'\in\{16,17\}$, which have already been classified:

\begin{theorem}(\cite[Theorem 1]{honold2016classification})\label{theo:7_17_6_3_2}
\label{thm_class_1}
$A_2(7,6;3)=17$ and there are $715$ isomorphism types of $(7,17,6;3)_2$ CDCs. Their automorphism groups have orders: 
$1^{551}\allowbreak{}2^{70}\allowbreak{}3^{27}\allowbreak{}4^{19}\allowbreak{}6^{6}\allowbreak{}7^{1}\allowbreak{}8^{8}\allowbreak{}12^{2}\allowbreak{}16^{7}\allowbreak{}24^{6}\allowbreak{}32^{5}\allowbreak{}42^{1}\allowbreak{}48^{5}\allowbreak{}64^{2}\allowbreak{}96^{1}\allowbreak{}112^{1}\allowbreak{}128^{1}\allowbreak{}192^{1}\allowbreak{}2688^{1}$.
\end{theorem}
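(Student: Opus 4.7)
The plan is to combine the classical partial-spread bound with a systematic computer enumeration exploiting the rich symmetry via Lemma~\ref{lem:split}. A $(7,17,6;3)_2$ CDC is a set of $17$ planes in $\mathrm{PG}(6,2)$ with pairwise trivial intersection (since $\sdist(U,W) = 6 - 2\dim(U\cap W) \ge 6$ forces $\dim(U \cap W) = 0$), i.e., a maximum partial plane spread; the relevant acting group is $\PGammaL(7,2) = \GL(7,2)$. The upper bound $A_2(7,6;3) \le 17$ follows from the Beutelspacher formula $A_q(v,2k;k) = \tfrac{q^v-q}{q^k-1} - q + 1$ for $v \equiv 1 \pmod k$ (here $v=7$, $k=3$, $q=2$), as used in Corollary~\ref{cor_one_incidence}. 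It is attained by extending a lifted Gabidulin code: via the isometry $\Lambda$ of Section~\ref{sec_preliminaries}, an MRD code with $m = 3$, $n = 4$ and rank distance $3$ furnishes $2^4 = 16$ planes disjoint from the special solid $S = \langle e_4, \dots, e_7\rangle$, and any plane contained in $S$ may be adjoined as a $17$th codeword.

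For the classification, the key structural invariant is the \emph{hole set} $\mathcal{H} = \mathrm{PG}(6,2) \setminus \bigcup_{\pi \in \mathcal{C}} \pi$ of size $127 - 7\cdot 17 = 8$. Double-counting against a hyperplane $H$ containing $a_H$ codewords yields $|H \cap \mathcal{H}| = 63 - 7a_H - 3(17-a_H) = 12 - 4a_H \in \{0,4,8,12\}$, so $\mathcal{H}$ is a $4$-divisible $8$-set in $\mathrm{PG}(6,2)$. The plan then proceeds in three stages:
(i) enumerate all $\GL(7,2)$-equivalence classes of $4$-divisible $8$-sets in $\mathrm{PG}(6,2)$ via the divisible-code dictionary of~\cite{kiermaier2017improvement}, which yields a short finite list of candidate hole configurations;
(ii) for each representative $\mathcal{H}$, compute its setwise stabilizer $G_\mathcal{H} \le \GL(7,2)$ and enumerate, up to the action of $G_\mathcal{H}$, all partitions of $\mathrm{PG}(6,2) \setminus \mathcal{H}$ into $17$ pairwise disjoint planes;
(iii) merge the resulting lists, eliminate residual duplicates by canonical-form testing (e.g., nauty), and record $|\Aut(\mathcal{C})|$ for each representative.

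Step~(ii) is the combinatorial core: a clique-enumeration in the graph on planes disjoint from $\mathcal{H}$ whose edges join disjoint pairs, restricted to those $17$-cliques that exactly cover $\mathrm{PG}(6,2) \setminus \mathcal{H}$. Symmetry is broken by applying Lemma~\ref{lem:split} recursively — prescribe the $G_\mathcal{H}$-orbit of the first plane, then the (first-plane-stabilizer)-orbit of the second, and so on — and partial spreads not canonical with respect to the induced ordering are pruned. The main obstacle is the sheer size of this search: since the statement records $551$ of the $715$ classes as having trivial automorphism group, no algebraic collapse can be expected, and only carefully layered symmetry breaking together with canonicity tests on partial spreads will keep the backtracking tree within computational reach. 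A global consistency check on the enumeration is the orbit-counting identity $\sum_{[\mathcal{C}]} |\GL(7,2)|/|\Aut(\mathcal{C})|$, which must equal an independently computed count of labelled maximum partial plane spreads and must reproduce both the cardinality $715$ and the multiplicity distribution in the statement.
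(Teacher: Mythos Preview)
The paper does not supply its own proof of this theorem; it is quoted verbatim as an established result from~\cite{honold2016classification} and used as input for the subsequent argument. There is thus nothing in the present paper to compare your proposal against.

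That said, your outline is essentially the method of the cited reference: the upper bound $A_2(7,6;3)\le 17$ via Beutelspacher, the lifted-MRD lower bound, and the classification organized around the hole set $\mathcal{H}$ of size $127-7\cdot 17=8$, using that $\mathcal{H}$ is $4$-divisible with respect to hyperplanes. One small refinement: since $\#\mathcal{H}=8$, the value $12$ in your list cannot occur (it would force $a_H=0$ and hence $12>8$ holes inside $H$); in fact $a_H\in\{1,2,3\}$ and $|H\cap\mathcal{H}|\in\{0,4,8\}$, so $\mathcal{H}$ is the point set of an affine $\F_2$-solid, which collapses stage~(i) to a single $\GL(7,2)$-orbit and makes the subsequent enumeration considerably more tractable than your text suggests.
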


\begin{theorem}(\cite[Theorem 2]{honold2016classification})\label{theo:7_16_6_3_2}
\label{thm_class_2}
There are $14445$ isomorphism types of $(7,16,6;3)_2$ CDCs. Their automorphism groups have orders: 
$1^{13587}\allowbreak2^{511}\allowbreak3^{143}\allowbreak4^{107}\allowbreak6^{20}\allowbreak7^{4}\allowbreak8^{19}\allowbreak9^{3}\allowbreak12^{24}\allowbreak16^{1}\allowbreak18^{1}\allowbreak20^{1}\allowbreak21^{1}\allowbreak24^{9}\allowbreak36^{1}\allowbreak42^{1}\allowbreak48^{3}\allowbreak64^{1}\allowbreak96^{1}\allowbreak112^{1}\allowbreak168^{2}\allowbreak288^{1}\allowbreak384^{1}\allowbreak960^{1}\allowbreak2688^{1}$.
\end{theorem}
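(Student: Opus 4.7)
Since a pair $U,W$ of codewords in a $(7,16,6;3)_2$ CDC satisfies $\sdist(U,W)=6-2\dim(U\cap W)\ge 6$, codewords are pairwise disjoint $3$-spaces, and the problem is to classify the partial plane spreads of size $16$ in $\operatorname{PG}(6,2)$ up to the action of $\PGammaL(7,2)=\GL(7,2)$. Each such configuration covers $16\cdot (2^3-1)=112$ of the $127$ points, leaving $15$ uncovered ``holes''. The hole set is a strong isomorphism invariant that I would use throughout as a signature for canonical-form testing.

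I would split the classification into two parts. First, the \emph{extendable} $16$-configurations are exactly those obtained from a $(7,17,6;3)_2$ CDC by deleting one codeword. For each of the $715$ isomorphism types supplied by Theorem~\ref{theo:7_17_6_3_2}, I would compute the orbits of codewords under the (known and largely small) stabilizer, remove one representative per orbit, and canonically label the result. Pairwise isomorphism can be decided efficiently by combining the hole set with a \texttt{nauty}-style canonical form on the incidence structure between holes, lines, and planes of the ambient geometry. The orders of the listed automorphism groups make this step cheap, and by orbit--stabilizer it also predicts the automorphism group orders of the resulting $16$-configurations.

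Second, the \emph{non-extendable} $16$-configurations require genuine enumeration. I would use orderly generation in the sense of McKay: starting from a single canonical plane, justified by transitivity of $\GL(7,2)$ on planes, I would build up partial spreads of sizes $2,3,\dots,16$ by canonical augmentation, running a canonicity test at each step and pruning by the LP relaxation of the integer program ``does the current partial spread extend to size $16$?''. Configurations still admitting an additional disjoint plane at the end are discarded, since they are already accounted for in the extendable list. The union of the two lists, after a final cross-check for isomorphism, yields the claimed total of $14445$ types and the multiset of automorphism group orders.

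The chief obstacle is combinatorial explosion in the middle of the enumeration tree: intermediate sizes may carry vastly more isomorphism classes than the final level $16$, so aggressive LP-based pruning combined with strong canonical-form tests is essential to keep the search tractable. A secondary difficulty is reliably separating the $13587$ trivial-automorphism types, where an invariant more refined than the hole set alone -- for instance the joint incidence pattern of holes with lines and with the $16$ planes -- will likely be needed to avoid false coincidences during canonical labelling and to certify that the two parts of the search together enumerate every isomorphism type exactly once.
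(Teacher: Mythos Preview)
The paper does not prove this statement at all: Theorem~\ref{theo:7_16_6_3_2} is quoted verbatim from~\cite{honold2016classification} and used as an input to the subsequent computations, with no argument given here. There is therefore no ``paper's own proof'' to compare your proposal against.

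That said, your outline is a reasonable sketch of how such a computational classification is typically obtained, and it is broadly in the spirit of the methods used in~\cite{honold2016classification}. A few remarks are in order. First, your split into extendable and non-extendable configurations is sound, but note that the extendable part does \emph{not} automatically determine the automorphism group orders via orbit--stabilizer from the data in Theorem~\ref{theo:7_17_6_3_2}: the stabilizer of a $16$-subset inside the automorphism group of a $17$-configuration can be a proper subgroup of the full automorphism group of that $16$-configuration (extra symmetries may appear once the seventeenth plane is removed), so you still need an independent automorphism computation on each resulting $16$-configuration. Second, your proposed LP pruning criterion ``does the current partial spread extend to size $16$?'' is essentially vacuous at small and intermediate levels, since almost every small partial plane spread in $\operatorname{PG}(6,2)$ does extend; the real work in orderly generation here lies in the canonical-form tests and in exploiting the hole structure, not in LP bounds. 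Finally, your sketch is a proof \emph{strategy}, not a proof: the actual result is established only by carrying out the computation and reporting its outcome, which is what~\cite{honold2016classification} does.
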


We call those configurations \emph{hyperplane configurations} and denote a transversal of the isomorphism classes of sets of planes in 
Theorems~\ref{theo:7_17_6_3_2} and~\ref{theo:7_16_6_3_2} by $\mathcal{A}_{17}$ and  $\mathcal{A}_{16}$, respectively. So, 
$\left(\iota^{-1}\left(\I{\mathcal{C}}{\widetilde{H}}\right)\right)^\perp$ is isomorphic to exactly one set in $\mathcal{A}_{16} \cup \mathcal{A}_{17}$.  
Computing the LP relaxation of a suitable integer linear programming formulation, see the next section, one can check easily that all but $38$ of the 
$715+14445$ hyperplane configurations can not be extended to $(8,257,6;4)_2$ CDCs. These $38$ remaining elements are listed in Table~\ref{tab:ausgeschrieben} and their LP values are stated Table~\ref{tab:details}. By $F_i$ we denote the corresponding sets of solids in $\F_2^8$ for $1 \le i \le 38$.

Next we want to enlarge some of the possible hyperplane configurations to larger substructures, more precisely those with indices $1\le i\le 7$ in 
Table~\ref{tab:ausgeschrieben}.
Therefore we distinguish both possibilities for $\#\I{\mathcal{C}}{\widetilde{H}}$. If it is $17$, then Lemma~\ref{lem_minilemma} guarantees a point $P \not \le \widetilde{H}$ such that $\#\I{\mathcal{C}}{\widetilde{H}}+\#\I{\mathcal{C}}{P} \ge 17+14=31$. If $\#\I{\mathcal{C}}{\widetilde{H}}=16$ then we can assume w.l.o.g. that $\#\I{\mathcal{C}}{P}\le 16$ for all points $P$, since otherwise we can apply the orthogonality and have the first case. Then Lemma~\ref{lem_minilemma} guarantees a point $P \not \le \widetilde{H}$ such that $\#\I{\mathcal{C}}{\widetilde{H}}+\#\I{\mathcal{C}}{P} \ge 16+15=31$.
Since the stabilizer of $\widetilde{H}$ in $\operatorname{GL}(\F^{8}_{2})$ acts transitively\footnote{
Since
$\operatorname{Stab}_{\operatorname{GL}\left(\F^{8}_{2}\right)}\left(\widetilde{H}\right)
=
\left\{
\left(\begin{smallmatrix}A & 0 \\ b & 1\end{smallmatrix}\right)
\in \operatorname{GL}\left(\F^{8}_{2}\right)
\,\left|\,
A \in \operatorname{GL}\left(\F^{7}_{2}\right) \text{ and } b\in \F_2^{7}
\right.\right\}
$,
any point that is not incident with $\widetilde{H}$, i.e., $\langle(p\mid 1)\rangle$ with $p \in \F_2^{7}$, can be mapped via $\left(\begin{smallmatrix}I_7 & 0 \\ p & 1\end{smallmatrix}\right)^{-1}$ to $\widetilde{P}$.
}
on the set of points not incident with $\widetilde{H}$, we can assume $\#\I{\mathcal{C}}{\widetilde{P}}+\#\I{\mathcal{C}}{\widetilde{H}}\ge 31$.
We call sets of $a$ solids in $\widetilde{H}$ and $b$ solids containing $\widetilde{P}$ with $16\le a\le 17$, $a+b=31$, and minimum subspace 
distance $d=6$ briefly \emph{$31$-point-hyperplane configurations}. 

Anticipating the results from Section~\ref{sec_main_thm}, we mention that altogether just $242$ non-isomorphic $31$-point-hyperplane configurations can be extended 
to CDCs with cardinality $257$. Moreover, we will verify indirectly that in all those extensions there exists a 
codeword $U$ such that $\mathcal{C}\backslash \{U\}$ is isomorphic to an LMRD code. 

\begin{theorem} (\cite{class_mrd})
  \label{thm_mrd_4_4_3}
  The Gabidulin construction gives the unique isomorphism type of (not
  necessarily linear) $4\times 4$ MRD codes over $\F_2$
  with minimum rank distance $3$.
\end{theorem}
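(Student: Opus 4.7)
The plan is to analyse the rank and kernel structure of the codewords directly, reducing the classification to the rigidity of smaller MRD subcodes. By translating by any fixed codeword we may assume $0\in\mathcal{M}$; since $\rdist(A,B)=\rk(A-B)\ge 3$, every nonzero codeword then has rank $3$ or $4$. The Delsarte--MacWilliams identities for rank-metric codes pin down the rank distribution of an MRD code solely from its parameters, so $\mathcal{M}$ must contain exactly $225$ codewords of rank $3$ and $30$ of rank $4$.

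Next, I would associate to each rank-$3$ codeword $A$ its $1$-dimensional kernel $\ker A\in\gaussmset{\F_2^4}{1}$. There are $15$ such $1$-spaces and $225=15\cdot 15$ rank-$3$ codewords. The codewords with a prescribed kernel $K$, together with $0$, form a set of matrices which, after quotienting the domain by $K$, embed as a rank-metric code in $\F_2^{3\times 4}$ with minimum rank distance $3$; the Singleton-type MRD bound caps the size of this projected code at $16$. Combining this with the double count $\sum_K n_K=225$ forces $n_K=15$ for every $K$, and each kernel-indexed subcode is itself an MRD code in $\F_2^{3\times 4}$. A dual statement holds for the images of the rank-$3$ codewords (using $A\mapsto \im(A)$), and the $30$ rank-$4$ codewords are further constrained by the analogous point--hyperplane incidences.

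Finally, one must glue the $15$ kernel-indexed MRD subcodes compatibly with the cross-kernel rank-distance constraints and with the placement of the $30$ rank-$4$ codewords. This rigidity should force $\mathcal{M}$ to carry an $\F_{16}$-scalar-multiplication structure identifying it with the Gabidulin model. The main obstacle is precisely this gluing step: ruling out all exotic, possibly non-linear assemblies that satisfy the local rank-distribution and kernel/image constraints but fail to combine globally. I would complete it by a canonical-augmentation enumeration modulo the natural equivalence group $\bigl(\GL_4(\F_2)\times\GL_4(\F_2)\bigr)\rtimes\langle\top\rangle$ of $4\times 4$ rank-metric codes together with translations, extending a partial code one codeword-orbit at a time and pruning by the rank-distribution and MRD Singleton constraints, and then verify that exactly one equivalence class survives, namely the Gabidulin code.
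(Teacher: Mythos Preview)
Your structural analysis is correct and elegant: the rank distribution of an MRD code is indeed forced, and the kernel-regularity argument giving $n_K=15$ for every $1$-space $K$ is valid (it can even be obtained directly, without MacWilliams, by slicing according to the value $Av$ for a fixed nonzero $v$ and observing that each slice has exactly $16$ codewords). This yields $15$ kernel-indexed $4\times 3$ MRD subcodes, a decomposition different from the paper's but equally natural.

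The gap is precisely where you locate it: the gluing step is the entire content of the theorem, and you have only proposed a method (canonical augmentation modulo the equivalence group) without carrying it out. Everything preceding is stage-setting; the classification itself lives in that enumeration, and asserting that ``this rigidity should force'' the Gabidulin structure is not a proof. The paper takes a more directly executable route: it slices the code by the value of the \emph{last row} rather than by kernel, so that for each $v\in\F_2^4$ the $16$ codewords with last row $v$ form, after deleting that row, a binary $3\times 4$ MRD code of minimum rank distance $3$. These were already classified into $37$ isomorphism types in earlier work. Fixing the $v=\mathbf{0}$ slice to be one of these $37$ types, a clique search tests for each nonzero $v$ whether $16$ compatible matrices with last row $v$ exist; only one of the $37$ types survives this test, and for it the full extension is a maximum-clique problem on a $1920$-vertex graph that yields exactly $8$ solutions, all isomorphic to the Gabidulin code. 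Your canonical-augmentation plan would in principle reach the same conclusion, but the paper's approach short-circuits the search by leveraging the existing $3\times 4$ classification and reducing everything to two concrete, tractable clique computations.
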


This result has been achieved computationally in the context of the work~\cite{class_mrd}.
However, to make this article as self-contained as possible, we decided to include the idea of the proof.

\begin{proof}
  Let $C$ be a $4\times 4$ MRD code over $\F_2$ of minimum rank
  distance $3$.  Then $\#C = 256$. For each vector $v\in\F_2^4$, there
  are exactly $16$ matrices in $C$ having $v$ as their last row.
  After removing this common row, these $16$ matrices form a binary
  $3\times 4$ MRD code of minimum rank distance $3$.  These MRD codes
  have been classified in~\cite{honold2016classification} into $37$
  isomorphism classes.

  Let $C'$ be one of these codes, extended to size $4\times 4$ by appending the zero vector as the last row to all the matrices in $C'$.
  Up to isomorphism, $C$ is the extension of one of these $37$ codes $C'$ by $256-16 = 240$ matrices.
  In particular, for each $v\in\F_2^4\backslash\{\mathbf{0}\}$, it must be possible to add $16$ matrices of size $4\times 4$ with last row $v$ without violating the rank distance condition.
  For fixed $v$, this question can be formulated as a clique problem:
  We define a graph $G_v$ whose vertex set is given by all $4\times 4$ matrices with last row $v$ having rank distance $\geq 3$ to all matrices in $C'$.
  Two vertices are connected by an edge if the corresponding matrices have rank distance $\geq 3$.
  Now the question is whether all graphs $G_v$, $v\in\F_2^4\backslash\{\mathbf{0}\}$, admit a clique of size $16$.
  Using the software~\cite{cliquer}, we found that out of the $37$ types of codes $C'$, this is possible only for a single type.

  For this remaining type, the full extension problem to a $4\times 4$ MRD code is again formulated as a clique problem.
  The graph is defined in a similar way, but without the restriction on the last row of the matrices in the vertex set.
  This yields a graph with $1920$ vertices.
  The maximum clique problem is solved within seconds for this graph
  \footnote{We noticed that the order of the vertices makes a huge difference for the running time.
  For fast results, matrices with the same last row should be numbered consecutively.}
  The result are $8$~cliques of maximum possible size
  $240$. In other words, there are $8$
  extensions to a rank distance $3$ code of size $16 + 240 = 256$, i.e., an MRD code.
  All $8$ codes turned out to be isomorphic to the Gabidulin code.  
  
\end{proof} 

By the last theorem, in our setting there is only a single type of
LMRD code, which is the lifted Gabidulin code.  It is iso-dual
(isomorphic to its orthogonal code).

\begin{corollary}
  \label{cor_isomorphism_types}
  Let $\mathcal{C}$ be an $(8,257,6;4)_2$ CDC that contains an LMRD
  code $\mathcal{C}'$. Then $\mathcal{C}$ is isomorphic to either
  $\{ \langle (I_4 \mid B) \rangle \mid B \in M \} \cup \{ \langle
  (0_{4\times 4} \mid I_4) \rangle \}$ or
  $\{\langle (I_4 \mid B) \rangle \mid B \in M \} \cup \{ \langle
  (0_{4\times 3} \mid I_4 \mid 0_{4\times 1}) \rangle \}$, where $M$
  is the $4\times 4$ Gabidulin code with minimum rank distance $3$,
  $I_4$ is the $4\times 4$ identity matrix, and $0_{m\times n}$ is the
  $m\times n$ all-zero matrix.
\end{corollary}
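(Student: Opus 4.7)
The plan is to normalise $\mathcal{C}'$ via Theorem~\ref{thm_mrd_4_4_3} and then classify the unique additional codeword $U$ according to its intersection with the $4$-space that $\mathcal{C}'$ avoids. By Theorem~\ref{thm_mrd_4_4_3} any LMRD with our parameters is isomorphic to the lifted Gabidulin code, so after applying an element of $\operatorname{GL}(\F_2^{8})$ we may take $\mathcal{C}'=\{\langle(I_4\mid B)\rangle:B\in M\}$. Since $|\mathcal{C}'|=2^8$ and $|\mathcal{C}|=257$, there is a unique $U\in\mathcal{C}\setminus\mathcal{C}'$, and the condition $\sdist\ge 6$ becomes $\dim(U\cap\langle(I_4\mid B)\rangle)\le 1$ for every $B\in M$. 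Setting $S=\langle(0_{4\times 4}\mid I_4)\rangle$ and $r=\dim(U\cap S)$, I stratify by $r\in\{0,\ldots,4\}$.

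The extreme cases dispose quickly. If $r=0$ then $U=\langle(I_4\mid A)\rangle$ is a graph and $\rk(A-B)\ge 3$ for all $B\in M$ would make $M\cup\{A\}$ a rank-$3$ code of size $257$, violating the MRD bound $2^{(4-3+1)\cdot 4}=256$; hence $A\in M$ and $U\in\mathcal{C}'$, a contradiction. If $r=4$ then $U=S$, whose intersection with every $\langle(I_4\mid B)\rangle$ is $\{0\}$, giving the first code. If $r=3$, writing $U=(U\cap S)+\langle(y,z)\rangle$ with $y\ne 0$, solving $(w,wB)\in U$ yields at most the two values $w\in\{0,y\}$, so $\dim(U\cap\langle(I_4\mid B)\rangle)\le 1$ for every $B\in M$ and all $15\cdot 30=450$ such solids are admissible. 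I then verify that the stabiliser of $\mathcal{C}'$ in the ambient symmetry group $\operatorname{GL}(\F_2^{8})\rtimes\langle\pi\rangle$---generated by block-diagonal automorphisms $(A,D)$ satisfying $A^{-1}MD=M$, the translations $B\mapsto B+C$ for $C\in M$, and the polarity $\pi$---acts transitively on these $450$ solids, producing the second code.

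The main technical step is ruling out $r\in\{1,2\}$. Writing $U=(U\cap S)\oplus W$ with $W=\{(y,\phi(y)):y\in\pi_1(W)\}$ and $b:=\dim\pi_1(W)=4-r$, the intersection dimension with $\langle(I_4\mid B)\rangle$ equals $b-\rk\bar g_B$, where $\bar g_B\colon\pi_1(W)\to V_2/(U\cap S)$, $y\mapsto\overline{yB-\phi(y)}$. Since every nonzero $B\in M$ has $\rk B\ge 3$, the restriction $M\to\operatorname{Hom}(L,V_2)$ is injective whenever $\dim L\ge 2$. For $r=2$ this, composed with reduction modulo $U\cap S$, yields a surjection $M\to\operatorname{Hom}(\pi_1(W),V_2/(U\cap S))\cong\F_2^{2\times 2}$, so some $B\in M$ realises $\bar g_B=0$, violating $\rk\bar g_B\ge 1$. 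For $r=1$ the analogous map $M\to\F_2^{3\times 3}$ has image equal to an $8$-dimensional hyperplane $H$, and $\{\bar g_B:B\in M\}$ ranges, as $\phi$ varies, over the two cosets of $H$. The key observation is that the rank-$\le 1$ matrices in $\F_2^{3\times 3}$ span the full $9$-dimensional space (they include the nine unit matrices $e_ie_j^{\top}$), so they cannot be contained in any hyperplane; consequently both cosets of $H$ meet the rank-$\le 1$ locus, forcing some $B\in M$ with $\rk\bar g_B\le 1$ for every choice of $\phi$ and contradicting the distance bound. Only $r\in\{3,4\}$ survive, yielding the two codes in the corollary; the most delicate step in this plan is the transitivity claim for $r=3$, which requires a careful description of the stabiliser of $\mathcal{C}'$.
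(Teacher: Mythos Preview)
Your approach is correct and follows the same broad outline as the paper's proof: normalise $\mathcal{C}'$ to the lifted Gabidulin code via Theorem~\ref{thm_mrd_4_4_3}, then classify the single extra codeword $U$ up to the action of the stabiliser of $\mathcal{C}'$. Where you genuinely differ is in the treatment of which $U$ are admissible. The paper simply asserts parenthetically that the $451$ solids compatible with $\mathcal{C}'$ are ``precisely the solids intersecting the special solid $S$ in at least a plane'', and then invokes the explicit automorphism group of order $230\,400$ (described via the identification $V\cong\F_{16}\times\F_{16}$) to split them into orbits of sizes $1$ and $450$. You instead supply a self-contained linear-algebraic proof that $r=\dim(U\cap S)\in\{0,1,2\}$ is impossible: $r=0$ via the MRD bound, $r=2$ via surjectivity of the reduction map $M\to\F_2^{2\times 2}$, and $r=1$ via the observation that the rank-$\le1$ matrices span $\F_2^{3\times 3}$ and hence cannot all lie in one hyperplane (so both cosets of the image of $M$ meet the rank-$\le1$ locus). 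This makes the identification of the $451$ admissible solids explicit rather than left to the reader, at the cost of more work. Both proofs then leave the transitivity on the $450$ solids with $r=3$ as a routine verification from the generators of the stabiliser; the paper's concrete description of the group and its order makes this verification slightly more immediate.

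One small imprecision: you list the polarity $\pi$ itself among the generators of the stabiliser, but the lifted Gabidulin code is only iso-dual, not literally self-dual, so $\pi$ need not fix $\mathcal{C}'$ setwise---only some $g\pi$ with $g\in\GL(\F_2^8)$ does. This is harmless here, since the subgroup inside $\GL(\F_2^8)$ (of order $230\,400$) already acts transitively on the $450$ solids with $r=3$.
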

\begin{proof}
  From Theorem~\ref{thm_mrd_4_4_3} we conclude that $\mathcal{C}'$ is
  the lifted
  Gabidulin code $M$. The automorphism group $A$ of
    $\mathcal{C}'$ has order $4\cdot 15^2\cdot
    2^8=230\,400$. Identifying $V$ with $\F_{16}\times\F_{16}$ and
    denoting by $\alpha$ a generator of $\F_{16}^\times$, $A$ is
    generated by $(x,y)\mapsto(x^2,y^2)$, $(x,y)\mapsto(\alpha x,y)$,
    $(x,y)\mapsto(x,\alpha y)$, and the ``translations''
    $(x,y)\mapsto (x,a_0x+a_1x^2+y)$ with $a_0x+a_1x^2\in M$. From
    this it is readily seen that $A$ partitions the $451$ solids
    intersecting each codeword of $\mathcal{C}'$ in at most a point
    (these are precisely the solids intersecting the special solid $S$
    of $\mathcal{C}'$ in at least a plane) into
    two orbits: An orbit of size $1$ containing $S$, which is fixed by
    $A$, and an orbit of size $450$ containing the solids that meet
    $S$ in a plane. This accounts for the two indicated isomorphism classes of
    $\mathcal{C}$. 
\end{proof}

\section{Integer linear programming models}
\label{sec_ILP}

It is well known that the determination of $A_q(v,d;k)$ can be formulated as an integer linear programming 
problem with binary variables (BLP). If all constraints of the form $x\in\{0,1\}$ are replaced by $x\in\mathbb{R}_{\ge 0}$, we 
speak of the corresponding linear programming relaxation (LP). Suppose that we already know that a CDC $\mathcal{C}$ contains the 
solids from $F \subseteq \gaussmset{\mathbb{F}_2^8}{4}$ and that each point and hyperplane is incident with at most $f$ codewords. Then 
we can state the following upper bounds on $\# \mathcal{C}$:  

\begin{lemma} \label{lem:phase_1}
Let $F \subseteq \gaussmset{\mathbb{F}_2^8}{4}$ and $f\in\mathbb{N}$.
Then any $(8,\#\mathcal{C},6;4)_2$ CDC $\mathcal{C}$ with 
$F \subseteq \mathcal{C}$ and such that each point and hyperplane is incident with at most $f$ codewords has $\#\mathcal{C} \le z_8^{\operatorname{BLP}}(F,f) \le z_8^{\operatorname{LP}}(F,f)$, where $\operatorname{Var}_8 = \gaussmset{\mathbb{F}_2^8}{4}$, $z_8^{\operatorname{LP}}$ is the LP relaxation of $z_8^{\operatorname{BLP}}$, and
\begin{align*}
z_8^{\operatorname{BLP}}(F,f) := \max
\sum_{U \in \operatorname{Var}_8} &x_U \\
\text{subject to}
\sum_{U \in \I{\operatorname{Var}_8}{W}} &x_U \le f			&&\forall W \in \gaussmset{\mathbb{F}_2^8}{w} 	&&\forall w \in \{1,7\} \\
\sum_{U \in \I{\operatorname{Var}_8}{W}} &x_U \le 1			&&\forall W \in \gaussmset{\mathbb{F}_2^8}{w} 	&&\forall w \in \{2,6\} \\
&x_U = 1 													&&\forall U \in F \\
&x_U \in \{0,1\} 											&&\forall U \in \operatorname{Var}_8.
\end{align*}
\end{lemma}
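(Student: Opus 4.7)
The plan is to exhibit the characteristic vector of $\mathcal{C}$ as a feasible point of the BLP whose objective value equals $\#\mathcal{C}$. Concretely, set $x_U=1$ if $U\in\mathcal{C}$ and $x_U=0$ otherwise. Then $\sum_{U\in\operatorname{Var}_8}x_U=\#\mathcal{C}$, and once feasibility is established, this immediately gives $\#\mathcal{C}\le z_8^{\operatorname{BLP}}(F,f)$. The second inequality $z_8^{\operatorname{BLP}}(F,f)\le z_8^{\operatorname{LP}}(F,f)$ is routine: any binary feasible solution is feasible in the relaxation.

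For the feasibility check, the constraints $x_U=1$ for $U\in F$ hold by the hypothesis $F\subseteq\mathcal{C}$, and the point/hyperplane constraints for $w\in\{1,7\}$ are precisely the assumption that each $1$-space and each $7$-space is incident with at most $f$ codewords. The substantive step is to verify the line and $6$-space constraints for $w\in\{2,6\}$. For $k=4$ in an $8$-dimensional ambient space, the minimum distance condition $\sdist(U_1,U_2)\ge 6$ for distinct codewords is equivalent to $\dim(U_1\cap U_2)\le 1$, and by the modular identity $\dim(U_1+U_2)=8-\dim(U_1\cap U_2)$, equivalently to $\dim(U_1+U_2)\ge 7$. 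Thus no two distinct codewords share a common $2$-space, proving the $w=2$ inequalities, and no two distinct codewords lie together in a common $6$-space, proving the $w=6$ inequalities. In both cases the sum in question is bounded by $1$ (and can be $1$ only if exactly one of the codewords is incident).

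There is no genuine obstacle here; the statement is a routine encoding of the combinatorial constraints as a binary program. The only subtlety worth pointing out is that the packing constraints are imposed at the intermediate dimensions $2$ and $6$ rather than only at the solid level, which is exactly what allows the LP relaxation to be a useful (and tractable) tool in the subsequent sections; without the $w\in\{2,6\}$ constraints, the formulation would still be correct at the BLP level but the LP bound would be far too weak.
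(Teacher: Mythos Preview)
Your proof is correct and follows essentially the same approach as the paper: both exhibit the characteristic vector of $\mathcal{C}$ as a feasible point of the BLP with objective value $\#\mathcal{C}$. The only difference is that you verify the $w\in\{2,6\}$ packing constraints directly from the distance condition, whereas the paper cites its earlier Lemma~\ref{lem:deg_upper_bound} (which yields $A_2(6,6;2)=A_2(6,6;4)=1$); these are the same argument.
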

\begin{proof}
Interpreting $(x_U)_{U \in \operatorname{Var}_8}$ as characteristic vector of $\mathcal{C}$,
the objective function 
equals $\#\mathcal{C}$.
The first two sets of constraints are feasible by Lemma~\ref{lem:deg_upper_bound} and the choice of $f$.
The third set of constraints is feasible since $F \subseteq \mathcal{C}$.

\end{proof}

If $\# F$ is rather small, then the computation of 
$z_8^{\operatorname{BLP}}(F,f)$ takes too much time, so that we also consider a linear programming 
formulation for $\# \{U\cap \widetilde{H} \mid U\in \mathcal{C}\}$, i.e., we consider the image of $\mathcal{C}$ in $\widetilde{H}$.  

\begin{lemma} \label{lem:phase_2}
For $F \subseteq \gaussmset{\mathbb{F}_2^7}{4}$ let $\operatorname{Var}_7(F):=\left.\left\{ U \in \gaussmset{\mathbb{F}_2^7}{3} \,\right|\, \dim(U \cap S) \le 1 \,\forall S \in F \right\}$ and 
$\omega(F,W) = \max\{ \#\Omega \mid \Omega \subseteq \I{\operatorname{Var}_7(F)}{W} \land \dim(U_1 \cap U_2) \le 1 \,\forall U_1 \ne U_2 \in \Omega \}$. 
If $\#F \in \{16,17\}$, then any $(8,\#\mathcal{C},6;4)_2$ CDC $\mathcal{C}$ with $\#\mathcal{C} \ge 255$ and $\iota(F) \subseteq \mathcal{C}$ and such that each point 
and hyperplane is incident with at most $\#F$ codewords satisfies $\#\mathcal{C} \le z_7^{\operatorname{BLP}}(F)$, where
\begin{align*}
z_7^{\operatorname{BLP}}(F) := \max \!\!\!\!\!\!\!
\sum_{U \in \operatorname{Var}_7(F)} &x_U + \#F \\
\text{subject to}
\sum_{U \in \I{\operatorname{Var}_7(F)}{W}} &x_U \le \#F-\#\I{F}{W}						&&\forall W \in \gaussmset{\mathbb{F}_2^7}{1} \\
\sum_{U \in \I{\operatorname{Var}_7(F)}{W}} &x_U \le 1									&&\forall W \in \gaussmset{\mathbb{F}_2^7}{2} \setminus (\cup_{S \in F} \gaussmset{S}{2}) \\
\sum_{U \in \I{\operatorname{Var}_7(F)}{W}} &x_U \le 1									&&\forall W \in \gaussmset{\mathbb{F}_2^7}{4} \setminus F \\  %\intertext{}
\sum_{U \in \I{\operatorname{Var}_7(F)}{W}} &x_U \le \min\{\omega(F,W),7\}				&&\forall W \in \gaussmset{\mathbb{F}_2^7}{5} : S \not \le W \,\forall S \in F  \\
\sum_{U \in \I{\operatorname{Var}_7(F)}{W}} &x_U \le 2(\#F-\#\I{F}{W})					&&\forall W \in \gaussmset{\mathbb{F}_2^7}{6} \\
\sum_{U \in \operatorname{Var}_7(F)} &x_U + \#F \ge 255 \\
&x_U \in \{0,1\} 																		&&\forall U \in \operatorname{Var}_7(F) \\
\end{align*}
\end{lemma}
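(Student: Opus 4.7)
The plan is to exhibit, for any CDC $\mathcal{C}$ satisfying the hypotheses of the lemma, a 0/1 vector $(x_U)_{U \in \operatorname{Var}_7(F)}$ that is feasible for $z_7^{\operatorname{BLP}}(F)$ and whose objective value equals $\#\mathcal{C}$. To set this up, split $\mathcal{C} = \iota(F) \sqcup \mathcal{C}'$ with $\mathcal{C}' = \{U' \in \mathcal{C} : U' \not\le \widetilde{H}\}$, and for each $U' \in \mathcal{C}'$ take the trace $\iota^{-1}(U' \cap \widetilde{H})$, a plane in $\mathbb{F}_2^7$. The condition $\sdist(\mathcal{C}) \ge 6$ has two immediate consequences: the trace meets every $S \in F$ in at most a line (otherwise $\dim(U' \cap \iota(S)) \ge 2$), placing it in $\operatorname{Var}_7(F)$; and two distinct elements of $\mathcal{C}'$ produce distinct traces (otherwise they would share a plane and have $\sdist \le 2$). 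Setting $x_U = 1$ precisely on the image of this trace map produces a binary vector with $\#F + \sum_U x_U = \#\mathcal{C}$, which also supplies the final cardinality constraint via the hypothesis $\#\mathcal{C} \ge 255$.

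Next I would verify each remaining constraint family by translating it into an incidence count in $\mathbb{F}_2^8$. For a point $W$: the hypothesis caps codewords through $\iota(W)$ by $\#F$, of which $\#\I{F}{W}$ lie in $\iota(F)$, leaving at most $\#F - \#\I{F}{W}$ traces with $W \le U$. For a line $W$ not contained in any $S \in F$: two codewords through $\iota(W)$ would meet in at least two dimensions, so $\sdist \le 4$, giving $\le 1$ codeword through $\iota(W)$, and none from $\iota(F)$. For a solid $W \notin F$: two traces $U_1 \neq U_2$ in the $4$-space $\iota(W)$ would meet in at least a line, again contradicting $\sdist \ge 6$ on the underlying codewords. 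For a $6$-space $W$: the space $\iota(W)$ lies in exactly three hyperplanes of $\mathbb{F}_2^8$, one of which is $\widetilde{H}$; since the other two $H_1, H_2$ satisfy $H_1 \cap H_2 = \iota(W) \le \widetilde{H}$, every $U' \in \mathcal{C}'$ with trace in $\iota(W)$ lies in exactly one $H_i$, each of which by hypothesis admits at most $\#F - \#\I{F}{W}$ new codewords, yielding the factor-two bound.

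The subtle case is the 5-space constraint. The planes $U$ contributing inside $\iota(W)$ pairwise meet in at most a line, inherited from $\dim(U'_i \cap U'_j) \le 1$ for the underlying codewords in $\mathcal{C}'$, and they lie in $\operatorname{Var}_7(F)$; by the very definition of $\omega(F, W)$ their number is thus at most $\omega(F, W)$. The absolute cap of $7$ follows from a secondary incidence count inside $\mathbb{F}_2^8$: the $5$-space $\iota(W)$ is contained in $\gaussmnum{3}{1}{2} = 7$ six-spaces, of which $\gaussmnum{2}{1}{2} = 3$ lie inside $\widetilde{H}$, so any $U' \in \mathcal{C}'$ with trace in $\iota(W)$ is pinned inside one of the four remaining six-spaces; since each six-space houses at most one codeword of $\mathcal{C}$ by the minimum-distance argument, the count is bounded (generously) by $7$, which combines with the typically much smaller $\omega(F,W)$ via the minimum.

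The main obstacle is the careful local incidence bookkeeping behind the 5-space and 6-space inequalities -- keeping track of exactly how many hyperplanes and six-spaces of $\mathbb{F}_2^8$ contain $\iota(W)$, how many of them lie in $\widetilde{H}$, and how the codewords of $\mathcal{C}'$ are distributed among them. Once these counts are correctly in place and the minimum-distance hypothesis has been consistently translated at each dimension, feasibility of $(x_U)$ is immediate and the desired inequality $\#\mathcal{C} = \#F + \sum_U x_U \le z_7^{\operatorname{BLP}}(F)$ follows.
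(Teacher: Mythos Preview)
Your argument is correct and follows essentially the same route as the paper: define $x_U$ as the indicator of the set of traces $\{\iota^{-1}(U'\cap\widetilde{H}) \mid U'\in\mathcal{C},\ U'\not\le\widetilde{H}\}$, then verify each constraint family by lifting to an incidence count in $\F_2^8$ and invoking either the minimum-distance hypothesis or the at-most-$\#F$ hypothesis on points and hyperplanes. One terminological slip to fix: in two places you write ``at most a line'' where you need ``at most a point'' (i.e.\ $\dim\le 1$), namely when placing the traces in $\operatorname{Var}_7(F)$ and when verifying the $\omega(F,W)$ bound---your parenthetical justifications are already the correct ones for the $\dim\le 1$ statement, so this is purely a wording error.
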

\begin{proof}
Interpreting $(x_U)_{U \in \operatorname{Var}_7(F)}$ as characteristic vector of $\{ U \cap \widetilde{H} \mid U \in \mathcal{C} \land U \not \le \widetilde{H} \}$, one can check the correctness of the objective function and the last two lines.
Since two solids in $\mathcal{C}$ intersect in at most a point, any two elements in $\{ U \cap \widetilde{H} \mid U \in \mathcal{C} \}$ also intersect in at most a point, which gives the constraints with $\dim(W) \in \{2,4\}$.

Any $5$-space $W$ contains at most $\omega(F,W)$ planes by choice of $\omega$, also $\iota(W)$ is incident with $\gaussmnum{8-5}{6-5}{2}=7$ $6$-spaces, which in turn contain at most one codeword of $\mathcal{C}$.
If $W$ contains a solid of $F$, then any plane in $W$ meets this solid in at least a line.
This gives the constraints with $\dim(W) = 5$.

For any point $W$ its embedding $\iota(W)$ is incident with at most $\#F$ codewords of $\mathcal{C}$ giving the constraints with $\dim(W) = 1$.

For any $6$-subspace $W$ its embedded $\iota(W)$ is contained in $\gaussmnum{8-6}{7-6}{2}=3$ hyperplanes in $\mathbb{F}_2^8$ of which one of them is $\widetilde{H}$.
Since each hyperplane is incident with at most $\#F$ codewords and $\bar{H}$ is incident with exactly $\#F$ codewords, i.e., $\iota(F)$, the other two hyperplanes are each incident with either $\#F$ codewords if $W$ contains no element of $F$ or $\#F-1$ codewords if $W$ contains one element of $F$.
Obviously two solids in a $6$-space intersect in at least a line and hence $W$ contains at most one element of $F$.
This gives the constraints with $\dim(W) = 6$.

The last inequality allows the BLP solver to cut the branch \& bound tree early since we are only interested in solutions of cardinality at least $255$.

\end{proof}

\section{Proof of the main theorem}
\label{sec_main_thm}

The algorithmic proof of Theorem~\ref{main_thm} is split into several phases that are described in detail in the following 
subsections; Subsection~6.i corresponds to Phase~i. The
(integer) linear programming problems are solved with
\texttt{CPLEX}~\cite{citeulike:8436868}.

Let $\mathcal{C}$ be an $(8,\#\mathcal{C},6;4)_2$ CDC with $\#\mathcal{C}\ge 257$. As argued at the beginning of Section~\ref{sec_substructures}, 
$\mathcal{C}$ has to contain one of the $715+14445$ hyperplane configurations from $\mathcal{A}_{17}\cup\mathcal{A}_{16}$. 
This list is reduced in Phase~1, see Section~\ref{subsec_phase_1}, and then extended to $31$-point-hyperplane configurations 
in Phase~2, see Section~\ref{subsec_phase_2}. The
resulting list is reduced in Phase~3, see
Section~\ref{subsec_phase_3}. Then 
we deduce that $\mathcal{C}$ must be an LMRD code extended by a single codeword, see Section~\ref{subsec_phase_4}. The classification of 
such structures at the end of Section~\ref{sec_substructures} concludes the proof. Let us mention that the termination of Phase~1 proves 
$A_2(8,6;4)\le 271$ and the termination of Phase~3 proves $A_2(8,6;4)=257$. The required computation times for the four phases are
$42\,087$, $2\,214$, $1\,804$, and $2\,168$~hours, respectively, i.e., $48\,273$~hours in total.

Besides the internal parallelization performed by the ILP solvers, we employed parallelization only by setting up independent subproblems.
We used the cluster of the University of Bayreuth\footnote{\url{http://www.hpc.uni-bayreuth.de}} for solving the subproblems and other computers for the management and generation of the subproblems.

\subsection{Excluding hyperplane configurations}
\label{subsec_phase_1}
For all $A \in \mathcal{A}_{16} \cup \mathcal{A}_{17}$ we computed $z_8^{\operatorname{LP}}(\iota(A^\perp),\#A)$ and found that all but 
$33$ elements in $\mathcal{A}_{16}$ ($37\,251$ hours) and $5$ elements in $\mathcal{A}_{17}$ ($1021$ hours) have an optimal value smaller than $256.9$, i.e., we have 
implemented  a safety threshold of $\varepsilon=0.1$.
These $38$ elements are listed in Table~\ref{tab:ausgeschrieben} and their LP values are stated in Table~\ref{tab:details}.

For indices $1\le i\le 38$ we computed $z_7^{\operatorname{BLP}}(\iota(F_i))$ and obtained 
$6$ elements in $\mathcal{A}_{16}$ and $2$ elements in $\mathcal{A}_{17}$ that may allow $z_7^{\operatorname{BLP}}(\iota(F_i))\ge 256.9$, 
cf. Table~\ref{tab:details} for details. This computation was aborted after $100$~hours of wall time for each of these $38$ subproblems.

$\operatorname{Var}_7(\iota(F_8))$ has exactly $948$ planes which form $56$ orbits ($4^3 8^{13} 16^{28} 32^{12} $) under the action of 
the automorphism group of order $32$.
We apply Lemma~\ref{lem:split} to obtain $56$ subproblems.
Less than $15$~hours were needed to verify 
$z_7^{\operatorname{BLP}}\le 256$ in all cases.

\subsection{Extending hyperplane configurations to $31$-point-hyperplane configurations}
\label{subsec_phase_2}
The seven hyperplane configurations, with indices $1\le i\le 7$ remaining after Phase~1, are extended to $31$-point-hyperplane 
configurations.

We define a graph $G_i=(V_i,E_i)$, whose vertex set $V_i$ consists of all solids in $\gaussmset{\mathbb{F}_2^8}{4}$ that contain $\widetilde{P}$ and 
intersect the elements from $F_i$ in at most a point. For $U,W\in V_i$ we set $\{U,W\}\in E_i$ iff $U\cap W=\widetilde{P}$.  
Using \texttt{Cliquer}~\cite{cliquer}, we enumerated all cliques of size $31-\#F_i$ of $G_i$ and computed a transversal $T(F_i)$ of the action of the stabilizer 
of $F_i$. The clique computations for $1\le i\le 7$, $i\neq 5$ took
between $27$ and $589$ hours (see
  Table~\ref{tab:detailsphase2} for details about the running times
  and $\#V_i$; the computation time for the transversal was
  negligible). The transversal is denoted by $T(F_i)$; see Column~6 of Table~\ref{tab:details} 
for the corresponding orbit lengths.

The clique computation for $G_5$ was aborted after $600$~hours and then performed in parallel by applying Lemma~\ref{lem:split} with $X$ as the vertex set of $G_5$, $\Gamma$ the automorphism group of $F_5$, and the function $f$ defined by $f(S)$ equals 
$1$ iff $S$ is a clique in $G_5$. In general, we label the elements of $T$ in decreasing order of the corresponding orbit lengths, since large orbits 
admit small stabilizers and forbid many elements from $X$ in the subsequent subproblems, resulting in few rather asymmetric large subproblems 
and many small subproblems. The $1258$ vertices of $G_5$ are partitioned into $24$ orbits of size $1$ and $617$ orbits of size $2$ by $\Gamma$, 
leaving $641$ graphs where we have to enumerate all cliques of size $31-\# F_5-1=14$. Since some of these graphs still consist of 
\textit{many} vertices, we iteratively apply Lemma~\ref{lem:split} with the identity group as $\Gamma$ for at most two further times: 
After the first round we split the $68$ subproblems, which lead to graphs with at least $700$ vertices. Then, we 
split the $81$ subproblems, which lead to graphs with at least $600$
vertices.
We are left with $104\,029$ graphs, for which we 
have to enumerate all cliques of size $14$, $13$ or $12$. All of these instances were solved in parallel with \texttt{Cliquer} to get a superset 
of the transversal of all cliques of size $15$ of $G_5$. Applying the action of the automorphism group of order~$2$ then allowed us to obtain a 
transversal as well as all cliques, simply as union of the orbits. This took about $750$ hours of CPU time, the smaller problems 
being preprocessed on a single computer and the remaining $55\,420$
larger subproblems being processed in parallel with $16$ cores.

The extension of the configuration with index $5$ took $750$ hours, and the extension of the other indices took $1464$ hours; see Table~\ref{tab:detailsphase2} for details.

\subsection{Excluding $31$-point-hyperplane configurations}
\label{subsec_phase_3}
For the $73\,234$ $31$-point-hyperplane configurations resulting from Section~\ref{subsec_phase_2}, we computed 
$z_8^{\operatorname{LP}}(.)$ in $953$ hours. The maximum value aggregated by the contained hyperplane configuration with index $i$ is stated 
in Column~7 of Table~\ref{tab:details}, see also Table~\ref{tab:detailsphase2}. For the configuration with index $1$ there are $195$, for the configuration with index $3$ there are $98$, and for the configuration with index $7$ there are $240$ 
$31$-point-hyperplane configurations with $z_8^{\operatorname{LP}}\ge 256.9$.

Next we computed $z_8^{\operatorname{BLP}}$ (see Column~8 of Table~\ref{tab:details}) for these remaining  $195+98+240$ cases in $851$ hours (see Table~\ref{tab:detailsphase2}). 

The counts for value exactly $257$ are $2+0+240$. 

\subsection{Structural results for $(8,N,6;4)_2$ CDCs with $N\ge 257$}
\label{subsec_phase_4}
So far we know that the hyperplane configuration of $\mathcal{C}$ in $\tilde{H}$ is either $F_1 \in \mathcal{A}_{16}$ 
or $F_7 \in \mathcal{A}_{17}$ with $2$ and $240$ possible $31$-point-hyperplane configurations, respectively.

For $F_1$ there exists a unique solid $S$ in $\F_2^8$ which is disjoint from the $31$ prescribed solids in both cases. Adding the 
constraint $x_S=0$ to the BLP of Lemma~\ref{lem:phase_1} gives an upper bound of $256$, i.e., $S$ has to be a codeword in $\mathcal{C}$, 
after about $2$~hours of computation time in each of the two cases. The codeword $S$ covers $15$ contained points. Via 
$x_S=1$ and
$
\sum_{P \in \I{\gaussmset{V}{1}}{S}}
\sum_{U \in \I{\operatorname{Var}_8}{P}}
x_U
\ge 16
$  
we can ensure that another codeword of $\mathcal{C}$ meets $S$ in a point. This modification of the BLP of Lemma~\ref{lem:phase_1} gives 
again an upper bound of $256$ after about two hours of computation time in both cases. Thus $\mathcal{C}\backslash\{S\}$ has to be an 
LMRD code.

For $F_7$ there exists a unique solid $S$ in $\F_2^8$ which is disjoint from $30$ of the prescribed solids and meets the other 
prescribed solid $S'$ in a plane, in all $240$ cases. By adding 
$
\sum_{P \in \I{\gaussmset{V}{1}}{S}}
\sum_{U \in \I{\operatorname{Var}_8}{P}}
x_U
\ge 8
$
we can ensure that $S$ meets another codeword from $\mathcal{C}$ in a point. The augmented BLP of Lemma~\ref{lem:phase_1} needs $9$~hours 
computation time and ends with $z_8^{\operatorname{BLP}}\le 256$ for each of the $240$ cases. Thus $\mathcal{C}\backslash\{S'\}$ has to be an 
LMRD code.

\section*{Acknowledgements}
The authors would like to thank the \emph{High Performance Computing
  group} of the University of Bayreuth for providing the excellent
computing cluster and especially Bernhard Winkler for his support.

{\footnotesize
% \bibliography{a2864}
% \bibliographystyle{abbrv}

}

\section*{Appendix}

It is well known that any plane in $\mathbb{F}_2^7$ has a unique binary $3 \times 7$ generator matrix in reduced row echelon form and vice versa.
In Table~\ref{tab:ausgeschrieben}, we list the $38$ $(7,16,6;3)_2$ and $(7,17,6;3)_2$ CDCs with $z_8^{\operatorname{LP}}(.)\ge 256.9$.
Each plane is denoted by an integer with at most seven digits, one for each column of the generator matrix in such a way that the three entries in each column are coefficients of a $2$-adic number, i.e., $(c_1, c_2, c_3)^T \leftrightarrow c_1 \cdot 2^0 + c_2 \cdot 2^1 + c_3 \cdot 2^2$.
Leading zeroes are omitted.
For example the number $1024062$ denotes the subspace
$
\left(
\begin{smallmatrix}
1&0&0&0&0&0&0\\
0&0&1&0&0&1&1\\
0&0&0&1&0&1&0\\
\end{smallmatrix}
\right)
$.
Note that since we are encoding matrices in reduced row echelon form, the three pivot columns are the first numbers $1$, $2$, and $4$ appearing in 
this order and no digit is larger than $7$.
Table~\ref{tab:details} lists for these CDCs whether it is in $\mathcal{A}_{16}$ or $\mathcal{A}_{17}$, the size of their automorphism group, the relaxations $z_8^{\operatorname{LP}}(.)$ and $z_7^{\operatorname{BLP}}(.)$, which are applied to the hyperplane configurations, then the orbits of the extension to point-hyperplane configurations of each hyperplane configuration and finally the maximum of $z_8^{\operatorname{LP}}(.)$ with prescribed point-hyperplane configuration grouped by the contained hyperplane configuration and, if needed, the maximum $z_8^{\operatorname{LP}}(.)$, again for prescribed point-hyperplane configuration grouped by the contained hyperplane configuration.
Details for the extension of one of the first seven hyperplane configurations to the corresponding point-hyperplane configurations are shown in Table~\ref{tab:detailsphase2}.

\begin{table}[h]
\caption{Details for the computation of all $31$-point-hyperplane configurations in Phase~2 and Phase~3.}
\label{tab:detailsphase2}
\centering
\begin{tabular}{l|l|lll}
&&\multicolumn{3}{c}{Wall-time in hours for}\\
$i$ & $\#V_i$ & Phase 2 & LP in Phase 3 & BLP in Phase 3 \\
\hline
1 & 1231	& 144	& 51	&	328 \\
2 & 1303	& 589	& 78	&		\\
3 & 1194	& 217	& 21	&	519 \\
4 & 1243	& 278	& 22	&		\\
5 & 1258	& 750	& 419	&		\\
6 & 1251	& 209	& 13	&		\\
7 & 864		& 27	& 349	&	4	\\
\end{tabular}
\end{table}

\begin{sidewaystable}
\caption{Details for the $38$ $(7,16,6;3)_2$ and $(7,17,6;3)_2$ CDCs with $z_8^{\operatorname{LP}}(.)\ge 256.9$.}
\label{tab:details}
\begin{tabular}{ll|l|lllll}
Index	&	Type	&	Aut	&	$z_8^{\operatorname{LP}}(.)$	&	$z_7^{\operatorname{BLP}}(.)$	&	Orbits of Phase~2	&	$\max z_8^{\operatorname{LP}}(\text{\lq\lq 31\rq\rq})$	&	$\max z_8^{\operatorname{BLP}}(\text{\lq\lq 31\rq\rq})$	\\
\hline
1	&	16	&	960	&	272	&	271.1856	&	$ 16^{2},240^{6},480^{47},960^{242} $	&	263.0287799	&	257	\\
2	&	16	&	384	&	266.26086957	&	267.4646	&	$ 96^{6},192^{91},384^{711} $	&	206.04279728	&		\\
3	&	16	&	4	&	270.83786676	&	265.3281	&	$ 1^{13},2^{29},4^{2638} $	&	257.20717665	&	254	\\
4	&	16	&	48	&	271.43451032	&	262.082	&	$ 4^{3},12^{11},24^{59},48^{1104} $	&	200.5850228	&		\\
5	&	16	&	2	&	263.8132689	&	259.8044	&	$ 1^{5},2^{59966} $	&	206.39304042	&		\\
6	&	16	&	20	&	267.53272206	&	259.394	&	$ 5,10^{9},20^{1843} $	&	199.98690666	&		\\
7	&	17	&	64	&	282.96047431	&	259.1063	&	$ 16^{10},32^{145},64^{6293} $	&	259.45364626	&	257	\\
8	&	17	&	32	&	268.0388109	&	257.2408							\\
\hline
9	&	16	&	1	&	263.82742528	&	256.392							\\
10	&	16	&	1	&	263.36961743	&	255.8305							\\
11	&	16	&	1	&	264.25957151	&	$\le$ 254							\\
12	&	16	&	1	&	263.85869815	&	$\le$ 254							\\
13	&	16	&	2	&	263.07052878	&	$\le$ 254							\\
14	&	16	&	12	&	261.91860556	&	$\le$ 254							\\
15	&	16	&	4	&	261.62648174	&	$\le$ 254							\\
16	&	16	&	12	&	261.31512837	&	$\le$ 254							\\
17	&	17	&	4	&	261.11518721	&	$\le$ 254							\\
18	&	16	&	1	&	260.96388752	&	$\le$ 254							\\
19	&	16	&	1	&	260.82432878	&	$\le$ 254							\\
20	&	16	&	2	&	260.65762276	&	$\le$ 254							\\
21	&	16	&	4	&	260.43036283	&	$\le$ 254							\\
22	&	16	&	2	&	260.19475349	&	$\le$ 254							\\
23	&	16	&	1	&	260.08583792	&	$\le$ 254							\\
24	&	16	&	1	&	260.04857193	&	$\le$ 254							\\
25	&	16	&	1	&	259.75041996	&	$\le$ 254							\\
26	&	16	&	2	&	259.55230081	&	$\le$ 254							\\
27	&	16	&	2	&	259.46335297	&	$\le$ 254							\\
28	&	16	&	12	&	259.11945025	&	$\le$ 254							\\
29	&	16	&	1	&	258.89395938	&	$\le$ 254							\\
30	&	17	&	24	&	258.75142045	&	$\le$ 254							\\
31	&	16	&	8	&	258.35689437	&	$\le$ 254							\\
32	&	16	&	1	&	257.81420526	&	$\le$ 254							\\
33	&	16	&	2	&	257.75126819	&	$\le$ 254							\\
34	&	16	&	4	&	257.63965018	&	$\le$ 254							\\
35	&	16	&	1	&	257.57663803	&	$\le$ 254							\\
36	&	16	&	1	&	257.2820438	&	$\le$ 254							\\
37	&	16	&	4	&	257.01931801	&	$\le$ 254							\\
38	&	17	&	128	&	257	&	$\le$ 254							\\
\end{tabular}
\end{sidewaystable}

\begin{sidewaystable}
\thisfloatpagestyle{empty}
\caption{The $38$ $(7,16,6;3)_2$ and $(7,17,6;3)_2$ CDCs with $z_8^{\operatorname{LP}}(.)\ge 256.9$.}
\label{tab:ausgeschrieben}
\hspace*{-3cm}
\begin{tabular}{l|l}
Index & $16$ or $17$ planes in $\F_2^7$ \\
\hline
1 & 1240000,1240124,1241062,1241146,1242463,1242547,1243401,1243525,1244635,1244711,1245657,1245773,1246256,1246372,1247234,1247310 \\
2 & 1240000,1240124,1241062,1241146,1242647,1242763,1243625,1243701,1244234,1244310,1245256,1245372,1246473,1246557,1247411,1247535 \\
3 & 124,1240000,1240124,1241447,1241563,1242631,1242715,1243276,1243352,1244230,1244314,1245753,1246401,1246525,1247046,1247162 \\
4 & 1240000,1240524,1241042,1241566,1242237,1242403,1243165,1243751,1244270,1244354,1245632,1245716,1246127,1246313,1247441,1247675 \\
5 & 124,1240124,1241046,1241162,1242637,1242713,1243671,1243755,1244230,1244314,1245276,1245352,1246407,1246523,1247441,1247565 \\
6 & 1240000,1240124,1241370,1241757,1242605,1242721,1243276,1243451,1244017,1244133,1245263,1245345,1246534,1246612,1247446,1247562 \\
7 & 124,124000,124124,1024062,1024146,1214452,1214746,1224403,1224727,1241572,1241633,1242557,1242615,1245461,1245724,1246476,1246730 \\
8 & 124,124000,124124,1024062,1024146,1214546,1214652,1224503,1224627,1241471,1241730,1242416,1242754,1245527,1245662,1246575,1246633 \\
9 & 124,1240000,1240124,1241157,1242634,1242756,1243673,1243710,1244211,1244335,1245262,1245347,1246463,1246501,1247425,1247546 \\
10  & 124,1240000,1240124,1241072,1241157,1242634,1242756,1243673,1243710,1244211,1244335,1245347,1246463,1246501,1247425,1247546 \\
11  & 124,1240000,1241072,1241157,1242634,1242756,1243673,1243710,1244211,1244335,1245262,1245347,1246463,1246501,1247425,1247546 \\
12  & 124,1240000,1240124,1241072,1241157,1242634,1242756,1243673,1243710,1244211,1245262,1245347,1246463,1246501,1247425,1247546 \\
13  & 124,1240000,1240124,1241241,1241630,1242415,1242561,1243166,1244023,1244452,1245613,1245737,1246354,1246775,1247206,1247372 \\
14  & 124,1240000,1240124,1241241,1241630,1242415,1242561,1243166,1243547,1244023,1244452,1245737,1246354,1246775,1247206,1247372 \\
15  & 124,1240000,1241437,1241513,1242661,1242745,1243252,1243376,1244230,1244314,1245647,1245763,1246051,1246175,1247422,1247506 \\
16  & 124,1240000,1241241,1241630,1242415,1242561,1243166,1243547,1244023,1244452,1245613,1245737,1246354,1246775,1247206,1247372 \\
17  & 124,124000,124124,1024466,1024553,1204267,1204342,1234506,1234713,1240570,1240721,1243437,1243565,1245042,1245126,1246453,1246634 \\
18  & 124,1240000,1241664,1241740,1242427,1242503,1243165,1243243,1244076,1244757,1245516,1245632,1246372,1246451,1247235,1247311 \\
19  & 124,1240000,1240124,1241367,1241446,1242521,1243243,1243562,1244076,1244757,1245311,1245734,1246150,1246673,1247235,1247412 \\
20  & 124,1240000,1240124,1241367,1241446,1242521,1242605,1243243,1243562,1244757,1245311,1245734,1246150,1246673,1247235,1247412 \\
21  & 124,1240000,1240124,1241367,1241446,1242521,1242605,1243243,1243562,1244076,1244757,1245311,1245734,1246150,1247235,1247412 \\
22  & 124,1240000,1240124,1241664,1241740,1242427,1242503,1243165,1244076,1244757,1245516,1245632,1246372,1246451,1247235,1247311 \\
23  & 124,1240000,1240124,1241664,1241740,1242427,1242503,1243165,1243243,1244076,1244757,1245516,1245632,1246372,1246451,1247311 \\
24  & 124,1240000,1240124,1241367,1241446,1242521,1242605,1243243,1244076,1244757,1245311,1245734,1246150,1246673,1247235,1247412 \\
25  & 124,1240000,1240124,1241664,1241740,1242427,1242503,1243165,1243243,1244076,1244757,1245516,1245632,1246372,1247235,1247311 \\
26  & 124,1240000,1240124,1241664,1242427,1242503,1243165,1243243,1244076,1244757,1245516,1245632,1246372,1246451,1247235,1247311 \\
27  & 124,1240000,1240124,1241740,1242427,1242503,1243165,1243243,1244076,1244757,1245516,1245632,1246372,1246451,1247235,1247311 \\
28  & 124,1240000,1240124,1241437,1241513,1242661,1242745,1243376,1244230,1244314,1245647,1245763,1246051,1246175,1247422,1247506 \\
29  & 124,1240124,1241664,1241740,1242427,1242503,1243165,1243243,1244076,1244757,1245516,1245632,1246372,1246451,1247235,1247311 \\
30  & 124,124000,124124,1024341,1024630,1204526,1204653,1234367,1234644,1240046,1240135,1243474,1243726,1245237,1245664,1246512,1246605 \\
31  & 124,1240000,1240124,1241057,1241173,1242655,1242771,1243602,1243726,1244230,1244314,1245267,1245343,1246465,1246541,1247516 \\
32  & 124,1240000,1240124,1241664,1241740,1242427,1242503,1243165,1243243,1244076,1245516,1245632,1246372,1246451,1247235,1247311 \\
33  & 124,1240000,1240124,1241664,1241740,1242427,1242503,1243243,1244076,1244757,1245516,1245632,1246372,1246451,1247235,1247311 \\
34  & 124,1240000,1240124,1241367,1241446,1242521,1242605,1243243,1243562,1244076,1244757,1245311,1245734,1246673,1247235,1247412 \\
35  & 124,1240000,1240124,1241367,1241446,1242521,1242605,1243243,1243562,1244076,1244757,1245311,1245734,1246150,1246673,1247235 \\
36  & 124,1240000,1240124,1241664,1241740,1242427,1242503,1243165,1243243,1244076,1244757,1245632,1246372,1246451,1247235,1247311 \\
37  & 10024,1202436,1211471,1221433,1232464,1240776,1243450,1243712,1244143,1244522,1245307,1245660,1246021,1246615,1247267,1247546 \\
38  & 124,124000,124124,1024062,1024146,1214466,1214772,1224437,1224713,1241561,1241620,1242574,1242636,1245407,1245742,1246423,1246765 \\
\end{tabular}
\end{sidewaystable}

\end{document}